\pgfplotsset{compat=1.5}
\DeclareMathOperator{\preper}{Preper}
\newtheorem{mthm}{Theorem}
\theoremstyle{definition}
\newtheorem{theorem}{Theorem}[section]
\newtheorem{corollary}[theorem]{Corollary}
\newtheorem{lemma}[theorem]{Lemma}
\newtheorem{proposition}[theorem]{Proposition}
\newtheorem*{question}{Question}
\newtheorem*{remark}{Remark}
\begin{document}

\title{H\'enon maps with many rational periodic points}
\author[Kim]{Hyeonggeun Kim}
\email{hk523@cam.ac.uk}
\author[Krieger]{Holly Krieger}
\email{hkrieger@dpmms.cam.ac.uk}
\author[Postolache]{Mara-Ioana Postolache}
\email{mip32@cam.ac.uk}
\author[Szeto]{Vivian Szeto}
\email{ws397@cam.ac.uk}
\date{\today}

\begin{abstract}
    In this paper we study the dynamical uniform boundedness conjecture in the context of H\'enon maps.  Building on work of Doyle and Hyde on polynomial maps in one variable, we construct for each odd integer $d \geq 3$ a H\'enon map of degree $d$ defined over $\mathbb{Q}$ with at least $(d-4)^2$ integral periodic points and an integral cycle of length at least $\frac{8d+10}{3}$.   This provides a quadratic lower bound on any conjectural uniform bound for periodic rational points of H\'enon maps, and answers in the affirmative a question of Ingram concerning the existence of many rational periodic points and long rational cycles for H\'enon maps.

    \end{abstract}

\maketitle

\section{Introduction}

Fix an integer $d \geq 2$ and let $f: \mathbb{P}^n \rightarrow \mathbb{P}^n$ be a morphism of degree $d$ defined over a number field $K$.  We say a point $P$ is \emph{preperiodic} for $f$ if the forward orbit 
$$\{ P, f(P), f^{\circ 2}(P), \dots \}$$
of $P$ is a finite set. By a theorem of Northcott \cite{northcott}, the set $\preper(f,K)$ of $K$-rational preperiodic points of $f$ is finite. The \emph{dynamical uniform boundedness conjecture} proposed by Morton and Silverman \cite{Morton1994RationalPP} asserts that the size of $\preper(f,K)$ is bounded by a quantity depending only the dimension $n$, the degree $d$, and the extension degree $[K:\mathbb{Q}]$. 

Very little is known about this conjecture. By work of Fakhruddin \cite{fakhruddin}, the Morton-Silverman conjecture implies the analogous uniform boundedness conjecture on rational torsion points of abelian varieties, currently only known in the case of dimension $1$ \cite{mazur_1, merel}.  However, computational and conditional evidence suggests that it is indeed difficult to produce maps with large sets of rational preperiodic points (see \cite{Benedetto} and \cite{looper_dynamicallang} for some of the strongest known evidence).

In the case when $n = 1$ and $K=\mathbb{Q}$, a basic interpolation argument allows the construction of rational maps $f: \mathbb{P}^1 \rightarrow \mathbb{P}^1$ over $\mathbb{Q}$ with at least $d+1$ rational points.  Recently, Doyle and Hyde \cite{doyle2022polynomialsrationalpreperiodicpoints} provided in each degree $d \geq 2$ an explicit example $r_d(x)$ of a polynomial of degree at most $d$ which has at least $d+6$ rational preperiodic points, and gave a non-constructive proof that there exist examples in degree at most $d$ with $d + \lfloor \log_2(d) \rfloor$ rational preperiodic points, providing the best known lower bound on any uniform constant arising in the Morton-Silverman conjecture.  Surprisingly, DeMarco and Mavraki \cite{DeMarco_Mavraki_2024} have shown that the examples of Doyle-Hyde have applications to uniform unlikely intersections in complex dynamics.

H\'enon maps appear as a natural generalization of quadratic polynomials and are a fruitful entry point to studying polynomial dynamics in dimension greater than one, as initiated in \cite{hubbard}. Given an integer $d \geq 2,$ \emph{H\'enon map of degree $d$} is a map of the form 
$$(x,y) \mapsto (y, -\delta x + p(y))$$
where $p$ is a degree $d$ polynomial of one variable and $\delta$ a non-zero constant: note that a H\'enon map is a birational map of the plane with constant Jacobian determinant $\delta$. A H\'enon map does not extend to a morphism of the projective plane, as the natural extension has $[1,0,0]$ as a point of indeterminacy; however, any H\'enon map  nonetheless admits a dynamical height function and so the periodic points satisfy a Northcott finiteness property \cite{silverman_henon, kawaguchi_canonical_height}.  That is, for any degree $d$ H\'enon map $f$ defined over a number field $K$, the set $\mathrm{Per}(f,K)$  of periodic points with coordinates in $K$ is a finite set.  

In analogy with the conjecture of Morton and Silverman, one might hope that the size of the set of $K$-rational periodic points depends only on the degree of the H\'enon map and the extension degree $[K:\mathbb{Q}]$; Ingram \cite{ingram_henon_smallheight} has provided evidence for this in the case that $d = 2$ and $K = \mathbb{Q}$ by computing the set of rational periodic points for H\'enon maps of the form $(y, x +y^2+b)$ where $b \in \mathbb{Q}$ has small height.

In this article, we study the polynomials constructed by Doyle and Hyde and use them to construct an explicit family of H\'enon maps $\{ h_d(x,y) \}$ over $\mathbb{Q}$ with degree $d \rightarrow \infty$ and a large number of rational periodic points and long rational cycles.  This answers in the affirmative the first two parts of the following question of Ingram \cite{henon_openqs}:

\begin{question} [Ingram] Over a number field $K$, is it possible to construct infinite families of generalized H\'enon maps of algebraic degree $d$, and $K$-rational cycles of length at least $d+3$?  Can one construct maps with $N_d$ periodic points, where $N_d - d \rightarrow \infty$, or even $N_d/d \rightarrow \infty$ as $d \rightarrow \infty$?
\end{question}

\begin{mthm}\label{thm:manyperiodicpoints} For each odd degree $d >2$ there exists an explicitly constructed polynomial $s_d$ of degree at most $d$ with rational coefficients such that the H\'enon map 
$$h_d(x,y) = (y, -x + s_d(y))$$
has at least $(d-4)^2$ rational periodic points.
\end{mthm}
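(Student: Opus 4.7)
The plan is to produce, for each odd $d > 2$, an explicit $s_d$ of degree at most $d$ such that $h_d$ admits a finite invariant subset of $\mathbb{Q}^2$ of cardinality $(d-4)^2$; since $h_d$ is invertible on its orbits, every point of a finite invariant set is periodic, and the count then yields the theorem at once.

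The construction rests on a simple symmetry. Fix a rational number $c$ and let $T \subset \mathbb{Q}$ be a set of $d-4$ rational numbers symmetric about $c$, i.e.\ $T = 2c - T$; since $d$ is odd, $d-4$ is odd, and one may take for instance $T = \{c,\, c\pm 1,\, \dots,\, c \pm (d-5)/2\}$. Choose $s_d \in \mathbb{Q}[y]$ of degree at most $d$ subject to the single requirement that $s_d(t) = 2c$ for every $t \in T$. These are only $d-4$ linear conditions on the $d+1$ coefficients of $s_d$, so one can write
$$s_d(y) \;=\; 2c + g(y) \prod_{t \in T}(y - t)$$
for an arbitrary polynomial $g$ of degree at most $4$. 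The freedom left in $g$ is precisely the room needed to tie $s_d$ to the Doyle--Hyde polynomial $r_d$ and to engineer the integer cycles of large period promised in the abstract, but for the $(d-4)^2$ lower bound any choice of $g$ will suffice.

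With such an $s_d$ in hand, the key orbit computation is: for any $(x, y) \in T \times T$ one has $s_d(y) = 2c$ and hence
$$h_d(x, y) \;=\; \bigl(y,\, -x + s_d(y)\bigr) \;=\; (y,\, 2c - x).$$
Because $T$ is symmetric about $c$, the value $2c - x$ lies in $T$, so $h_d(T \times T) \subseteq T \times T$. Thus $T \times T$ is a finite $h_d$-invariant subset of $\mathbb{Q}^2$ of cardinality $(d-4)^2$, and because $h_d$ acts bijectively on it every one of its points is periodic, which proves the bound.

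The main obstacle I expect lies not in this count, which is essentially a one-line symmetry observation once the invariant product set is in view, but rather in the explicit choice of $g$ needed to match the construction with the Doyle--Hyde polynomial $r_d$: one must verify that $s_d$ can be taken in $\mathbb{Z}[y]$ so that the $(d-4)^2$ periodic points are genuinely \emph{integral}, and that with this choice $h_d$ additionally carries the long-period integer cycles advertised in the introduction. Those features go beyond the invariance-of-$T\times T$ argument and will presumably require a more delicate analysis of how the Doyle--Hyde orbit structure lifts through $h_d$.
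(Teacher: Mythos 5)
Your argument is correct as a proof of the literal statement, but it is a genuinely different and far more elementary route than the paper's. You interpolate $s_d$ to be constant (equal to $2c$) on a symmetric set $T$ of $d-4$ rationals, so that $h_d$ restricted to $T\times T$ is the order-four map $(x,y)\mapsto (y,2c-x)$; hence $T\times T$ is invariant and every one of its $(d-4)^2$ points has period dividing $4$. This is airtight, and in fact conservative: taking $|T|=d$ still keeps $\deg s_d\le d$ and yields $d^2$ rational (even integral) periodic points. The paper does something quite different: its $s_d$ is the Doyle--Hyde discrete approximation to $\frac{2}{\sqrt{3}}\sin(\pi y/3)$, which is \emph{not} constant on any large set of integers but runs through the $6$-periodic pattern $-1,-1,0,1,1,0$ on $[-\frac{d+1}{2},\frac{d+1}{2}]\cap\mathbb{Z}$; the count is then proved by (i) archimedean and $p$-adic escape-radius estimates forcing every rational periodic point to be an integer point with $\max\{|x|,|y|\}\le\frac{d+5}{2}$, and (ii) a boundary case analysis showing every integer point in a box of radius at least $\frac{d-5}{2}$ returns to that box under iteration, hence is periodic. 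What the harder route buys is everything beyond the bare count: the same family carries integer cycles of length $\frac{8d+10}{3}$ (Theorem B), which your construction cannot produce since all your periodic points have period at most $4$, together with a matching upper bound $(d+6)^2$ and the transcendental limiting dynamics. Accordingly, your closing speculation that the residual factor $g$ can be chosen to recover the Doyle--Hyde polynomial is off: the paper's $s_d$ does not vanish identically (shifted or otherwise) on $d-4$ nodes, so the two families are disjoint, and the genuinely hard content of the paper (long cycles, control of \emph{all} rational periodic points) does not follow from the invariance of $T\times T$.
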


If $d = 2k+1,$ the constructed polynomials $s_d$ are integer-valued polynomials which satisfy 
$$\lim_{k \rightarrow \infty} (-1)^k s_{2k+1}(x) = \frac{2}{\sqrt{3}}\sin\left(\frac{\pi x}{3}\right) =: s_{\infty}(x,y).$$
A direct computation shows (see section \ref{subsection:transc_henon}) that any point of $\mathbb{C}^2$ with integer coordinates is periodic under iteration of the limiting H\'enon map $h_{\infty}(x,y)= (y,-x+s_{\infty}(y)$, with period $n \in \{ 1,4,5,6,12,20\}$ determined by the class of $(x,y)$ modulo $6$.  

Though the proof of Theorem \ref{thm:manyperiodicpoints} is not asymptotic, each map $h_d$ preserves the lattice $\mathbb{Z}^2$ and so one expects a large set of periodic integral points within the plane region where $s_d$ is a good approximation of $s_{\infty}$.  Interestingly, the dynamics near the boundary of this region allows for integer cycles of $h_d$ with long period, despite the bound of $20$ on the period of an integer cycle for the limiting map $h_{\infty}$: see Figure \ref{fig:longcycle}.

\begin{figure}[h!]
  \includegraphics[scale=0.4]{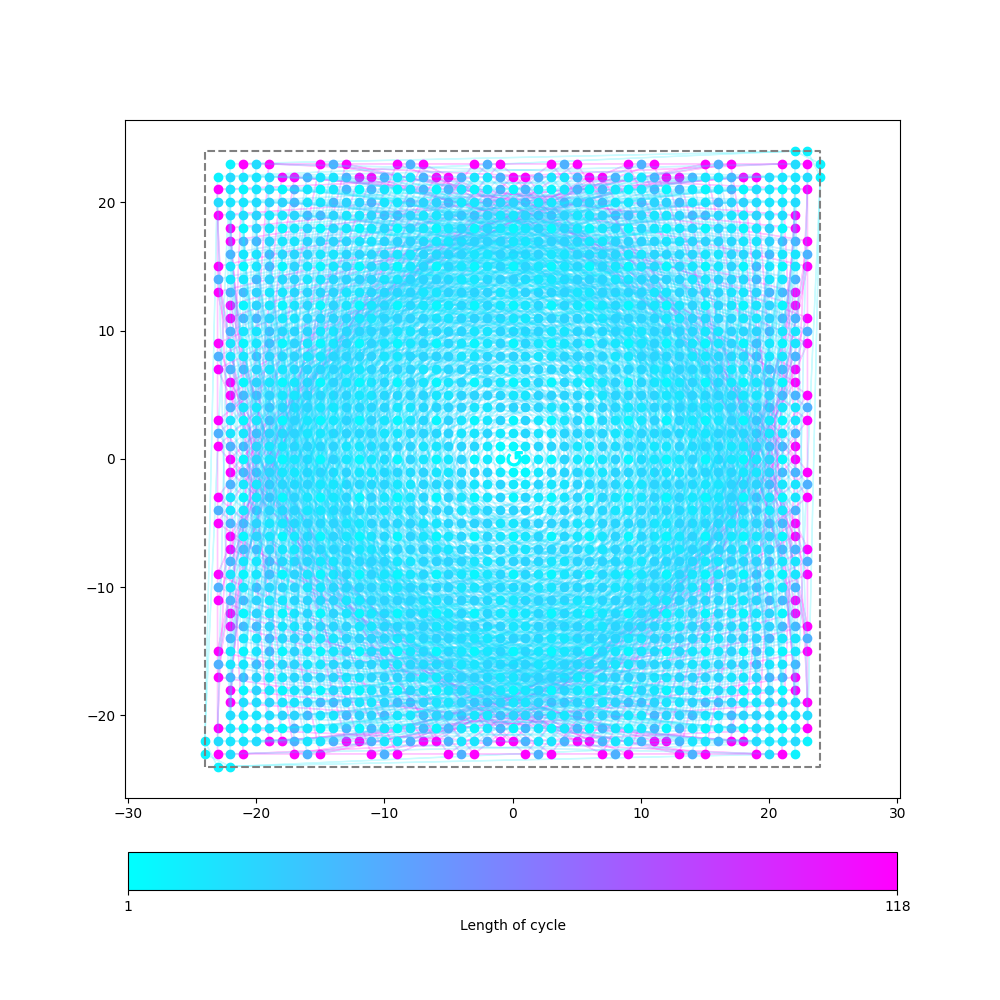}
  \caption{Periodic integer cycles for $h_d$ with $d = 43$ plotted in $\mathbb{Z}^2$ and colored by period length.  There are a large number of periodic points with cycle length at most $20$, but the boundary of this region admits a cycle of length $118$.}
  \label{fig:longcycle}
\end{figure}

\begin{mthm}\label{thm:longcycles} Let $d \equiv 1 \mod 6$ and $h_d(x,y) = (y, -x + s_d(y))$ as above.  Then $h_d$ has a cycle of integer points of length $\frac{8d+10}{3}.$
\end{mthm}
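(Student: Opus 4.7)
A length-$N$ periodic orbit of $h_d$ corresponds to a purely $N$-periodic integer sequence $(a_n)_{n \in \mathbb{Z}}$ satisfying the recurrence
\[
a_{n+2} = s_d(a_{n+1}) - a_n,
\]
whose consecutive pairs $(a_{n-1}, a_n)$ list the points of the orbit. The plan is to exhibit an explicit seed $(a_0, a_1) \in \mathbb{Z}^2$, trace the iteration, and verify closure after exactly $N = (8d+10)/3$ steps.

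\textbf{Local behavior of $s_d$ on integers.} The explicit construction of $s_d$ in the proof of Theorem \ref{thm:manyperiodicpoints} pins down the values $s_d(n)$ for integer $n$ in a specified range. Since $(-1)^k s_d$ converges pointwise to $s_\infty$, whose values on integers are $6$-periodic and lie in $\{-1,0,1\}$, I would identify an \emph{interior interval} $[-M,M]$, with $M$ a linear function of $d$, on which $s_d(n) = (-1)^k s_\infty(n)$, together with an explicit description of the larger values taken by $s_d$ at and just beyond the boundary of this interval. This step uses exactly the same data that underlies Theorem \ref{thm:manyperiodicpoints}.

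\textbf{Constructing the long orbit.} Inside $[-M,M]$ the recurrence replicates the $h_\infty$-recurrence, whose integer orbits have periods in $\{1,4,5,6,12,20\}$, so no cycle of length $(8d+10)/3$ lives purely in the interior. To force a long cycle I would choose $(a_0,a_1)$ with one coordinate at or just beyond $M$, so that the first application of $s_d$ produces a large value that launches the orbit into a long transit across the interior. Using the reversibility $h_d^{-1}(x,y) = (s_d(x)-y,\,x)$ together with the oddness $s_d(-x) = -s_d(x)$ (expected from $d$ odd and the symmetric Doyle--Hyde construction), the cycle should decompose into four symmetric transit arcs of length roughly $\frac{2d}{3}$, joined by short boundary moves, totaling $(8d+10)/3$ steps. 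The congruence $d \equiv 1 \pmod{6}$ should be precisely the condition needed to line up the boundary arithmetic with the $6$-periodic interior pattern so as to yield this exact count.

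\textbf{Main obstacle.} The principal difficulty will be the combinatorial bookkeeping: confirming that the orbit closes after exactly $(8d+10)/3$ steps and not at some proper divisor. This requires an exact count of interior steps between successive boundary encounters, together with a primitivity argument --- either by locating an explicit symmetric self-collision within the orbit or by using the list of boundary values to rule out every smaller candidate period --- to conclude that $(8d+10)/3$ is indeed the minimal period.
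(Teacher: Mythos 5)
Your scaffolding matches the paper's: recast the orbit as the recurrence $a_{n+2}=s_d(a_{n+1})-a_n$, pin down the exact integer values of $s_d$ on the interior interval and just beyond it, seed at the edge of that interval, count steps, and check primitivity; and you are right that $d\equiv 1\pmod 6$ is exactly the divisibility condition that makes the count come out. But the dynamical mechanism you propose for producing the long cycle is wrong, and it is the one idea the proof cannot do without. Write $R=\tfrac{d+1}{2}$, so that $s_d$ takes the $6$-periodic values $\dots,-1,-1,0,1,1,0,\dots$ on $[-R,R]$, while $s_d(R+1)=2$ and $s_d(R+2)=d+2$. There is no such thing as a ``long transit across the interior'': inside the box, $h_d$ is the order-$4$ rotation $(x,y)\mapsto(y,-x)$ perturbed by values in $\{-1,0,1\}$, which is precisely why purely interior integer orbits have period at most $20$. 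And a seed that actually meets a large value of $s_d$ (i.e.\ a coordinate at $R+2$ or beyond, where $s_d(R+2)=d+2$ is comparable to the width of the box) is thrown onto or past the edge of the region containing all periodic points, not sent on a slow crossing of the interior. So the orbit you describe cannot exist.

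The correct mechanism is the \emph{small} anomaly $s_d(R+1)=2$, one more than the value the $6$-periodic pattern would predict. The long cycle never leaves the boundary frame of the box: every one of its points has a coordinate in $\{\pm R,\pm(R+1)\}$. The paper seeds at $(R+1,-R+1)$ and verifies the drift identity $h_d^{\circ 8}(R+1,y)=(R+1,y+6)$ for $-R+1\le y\le R-7$ with $y\equiv R-1\pmod 6$: an $8$-step excursion that bounces between the right and left edges and advances the free coordinate by $6$. Chaining $\tfrac{d-1}{6}$ such blocks up the right edge, three corner steps, the mirror-image descent along the left edge (using $s_d(-x)=-s_d(x)$), and three more corner steps gives $2\cdot 8\cdot\tfrac{d-1}{6}+6=\tfrac{8d+10}{3}$; the listed points are visibly distinct, so primitivity is immediate and no separate divisor argument is needed. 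Your decomposition into four interior arcs of length about $\tfrac{2d}{3}$ happens to total correctly, but without replacing the ``large value launches an interior transit'' picture by this boundary-drift picture you would not locate the orbit, let alone establish the exact step count.
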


One can produce similar results in other odd degrees by introducing a shift to the polynomial $s_d$.  This is in striking contrast with the one-dimensional results of Doyle and Hyde, which are many-to-one on their sets of preperiodic integer points and hence produce only short cycles.  The family of degree $d$ H\'enon maps has dimension $d+2$ and so this theorem also improves substantially on what is obtained by interpolation to produce an integer cycle (this is still true in the larger family of generalized H\'enon maps, which can have mixed dimension (see \cite{FriMil}) if $d$ is a composite integer, but has maximal component dimension $d+6$). 

The organization of the paper is as follows.  In Section 2 we introduce the polynomials $s_d$ used to construct the H\'enon maps $h_d$, and in Section 3 we prove various auxiliary estimates which will allow us to understand $h_d$, as well as noting as an application an optimality result for Doyle-Hyde.  Section 4 provides the proof of Theorem \ref{thm:manyperiodicpoints}, and Theorem \ref{thm:longcycles} is proved in Section 5, which also includes an analysis of the limiting map $h_{\infty}.$

\subsection*{Acknowledgements} The work for this paper was done during a 2023 Summer Research In Mathematics (SRIM) programme in Cambridge.  The first author was supported by SRIM and the third and fourth authors by the Philippa Fawcett Internship Programme. The fourth author was also supported by the Trinity College Summer Studentship Scheme. We are grateful to the Faculty of Mathematics and the SRIM organizer Dhruv Ranganathan for their support. We also thank Romain Dujardin, Trevor Hyde, and Patrick Ingram for helpful conversations related to this work.

\section{Background}

In this section, we introduce the polynomials $s_d$ as integer-valued approximations of discrete trigonometric functions.  We also review the notion of dynamical compression and the constructions given in \cite{doyle2022polynomialsrationalpreperiodicpoints}.

\subsection{Compressing polynomials} Let $K$ be a number field, $f\in K[x]$ be a polynomial and $f^n := f^{\circ n}$ be its $n$-fold composition. Recall that $\alpha\in \overline{K}$ is \textit{preperiodic} if the forward orbit of $\alpha$ under $f$ is finite. Let $\preper(f,K)$ denote the set of preperiodic points in $K$. 

Let $[m] := \left\{1,2,\cdots, m\right\}$ for positive integer $m$. A polynomial $g\in \mathbb{C}[x]$ is said to exhibit \textit{dynamical compression} if it is linearly conjugate to some polynomial $f\in \mathbb{C}[x]$ which satisfies $f([m])\subseteq [m]$ for some $m>d+1$. 

\subsection{Discrete approximations of sine and cosine} Define the following functions of one variable:
$$ c_{2j}(x) := \frac{1}{(2j)!}\prod_{i=1}^{j} \left(x^2-\left(\frac{2i-1}{2}\right)^2\right),\ 
c_{2j+1}(x) := \frac{1}{(2j+1)!}x\prod_{i=1}^{k}(x^2-i^2).$$
We then define for any $k \geq 0$ the functions
$$ s_{2k}(x) := \sum_{j=0}^{k}(-1)^{k-j}c_{2j}(x)$$
and
$$
s_{2k+1}(x) := \sum_{j=0}^{k}(-1)^{k-j}c_{2j+1}(x).
$$

The degree $d$ polynomial $s_d\in \mathbb{Q}[x]$ is integer-valued on $\mathbb{Z}+\frac{d+1}{2}$ and takes values in $\{-1, 0, 1\}$ for integers in the interval $[-\frac{d+1}{2}, \frac{d+1}{2}].$ In \cite{doyle2022polynomialsrationalpreperiodicpoints}, Doyle and Hyde show that by applying the following transformation:
$$ r_d(x) = \begin{cases}
    s_d(x-3-\frac{d+1}{2})+2 & \text{if } d \text{ even} \\
    s_d(x-3-\frac{d+1}{2}) - x + d + 6 & \text{if } d \text{ odd}
\end{cases}
$$
one obtains a family of integer-valued polynomials $r_d\in\mathbb{Q}[x]$ that exhibit dynamical compression, namely:
$$ \begin{cases}
    r_d([d+6])\subseteq [d+5] & \text{if } d \text{ even} \\ 
    r_d([d+6])\subseteq [d+4] & \text{if } d \text{ odd}. \\ 
\end{cases}$$

The polynomial family $s_d$ is alternatively characterized by initial values together with the inductive relation
$$ \delta s_{d+1}(x) = s_d(x),\ s_1(x) = x$$
where $\delta$ is the \textit{central difference operator}:
$$ \delta f(x) = f\left(x+\frac{1}{2}\right) - f\left(x-\frac{1}{2}\right).$$
For $d \equiv 1 \mod 4,$ $s_d$ can be interpreted as a discrete analogue of the sine function on some finite interval; indeed, the construction of $s_d$ boils down to being a polynomial interpolation on the set $\left\{-\frac{d+1}{2},-\frac{d-1}{2},\cdots,\frac{d+1}{2}\right\}$ of a 6-periodic sequence $-1,-1,0,1,1,0,\cdots$. Applying the discrete $\delta$, see that each $s_d$ corresponds to $\pm \sin$ or $\pm \cos$. We formalize this observation, which summarizes the computation of subsections 4.2 and 4.4 in \cite{doyle2022polynomialsrationalpreperiodicpoints}:
\begin{lemma}\label{lem:6periodic}
    Let $\sigma:\mathbb{Z}\to \mathbb{Z}$ be a 6-periodic function defined by $\sigma(0) = 0$, $\sigma(1) = 1$, $\sigma(2)=1$ and $\sigma(m+3) = -\sigma(m)$ for all $m$. Then $s_d(m)$ agrees with $\sigma(m+\frac{3}{2}(d-1))$ on values $m=-\frac{d+1}{2},-\frac{d-1}{2},\cdots,\frac{d+1}{2}$ for all $d\geq 1$.
\end{lemma}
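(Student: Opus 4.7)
The plan is to induct on $d$ using the defining recurrence $\delta s_{d+1}(x) = s_d(x)$ with initial data $s_1(x) = x$. The base case $d = 1$ is immediate: $s_1(m) = m$ agrees with $\sigma(m)$ on $m \in \{-1, 0, 1\}$ by direct inspection of the $6$-periodic pattern.

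For the inductive step, I would evaluate the recurrence at $x = m - \tfrac{1}{2}$, which gives
\[
s_{d+1}(m) - s_{d+1}(m - 1) = s_d\!\left(m - \tfrac{1}{2}\right).
\]
Applying the inductive hypothesis to the right-hand side rewrites it as $\sigma\bigl(m + \tfrac{3d}{2} - 2\bigr)$, while the target formula for $s_{d+1}$ rewrites the left as $\sigma\bigl(m + \tfrac{3d}{2}\bigr) - \sigma\bigl(m + \tfrac{3d}{2} - 1\bigr)$. Setting $n = m + \tfrac{3d}{2}$, the entire inductive step then reduces to verifying a purely combinatorial identity on $\sigma$ of the shape
\[
\sigma(n) - \sigma(n - 1) = \pm\,\sigma(n - 2),
\]
which I would confirm by checking the six residue classes modulo $6$ against the defining relation $\sigma(m + 3) = -\sigma(m)$: the consecutive differences of $\sigma$ cycle through $1, 1, 0, -1, -1, 0$, matching $\pm \sigma(n - 2)$.

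The recurrence pins down $s_{d+1}$ on the interior of its range modulo a single additive constant. To fix this constant, I would evaluate $s_{d+1}$ at the extremal point $m = \pm \tfrac{d+2}{2}$ directly using the closed form $s_{d+1}(x) = \sum_{j}(-1)^{k-j} c_{2j + \epsilon}(x)$ with $\epsilon \in \{0,1\}$ chosen by parity: the leading term $c_{d+1}$ at the boundary is straightforward to evaluate from its product formula, and the Doyle--Hyde observation that each $s_d$ lies in $\{-1, 0, 1\}$ on its integer-valued range then forces agreement with the unique $\sigma$-value consistent with the already-established differences.

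The main obstacle is parity bookkeeping. The polynomial $s_d$ is odd for odd $d$ and even for even $d$, the integer-valued lattice $\mathbb{Z} + \tfrac{d+1}{2}$ alternates between $\mathbb{Z}$ and $\tfrac{1}{2} + \mathbb{Z}$, and the alternating signs $(-1)^{k-j}$ in the definition of $s_d$ interact with the sign appearing in the $\sigma$-identity above. The cleanest way to manage this is to run two parallel inductions --- one on odd $d$ starting from $d = 1$, one on even $d$ starting from $d = 2$ --- each stepping by $2$ so that parity, lattice, and the relevant sign convention are preserved along the induction, keeping the combinatorial identity in a clean uniform form.
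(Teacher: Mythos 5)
Your strategy --- induct on $d$ via $\delta s_{d+1} = s_d$, match against the corresponding difference identity for $\sigma$, and fix the leftover constant using the fact that both sides take values in $\{-1,0,1\}$ --- is essentially the paper's proof. (The paper routes through Doyle--Hyde's doubly periodic array $\rho(m,d)$, but the recursion $\rho(m+1,d+1)-\rho(m,d+1)=\rho(m,d)$ is exactly $\delta s_{d+1}=s_d$ restricted to the lattice, and the constant is fixed by the boundedness argument rather than an endpoint evaluation.) The substantive issue is the sign you leave as ``$\pm$'', which is not a detail you can defer: checking the six residue classes gives $\sigma(n)-\sigma(n-1)=-\sigma(n-2)$, i.e.\ $\delta\sigma(x)=\sigma(x+3/2)=-\sigma(x-3/2)$. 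A step-by-one induction therefore yields $s_d(m)=\sigma\bigl(m-\tfrac32(d-1)\bigr)=(-1)^{d-1}\sigma\bigl(m+\tfrac32(d-1)\bigr)$, which matches the stated lemma only for odd $d$. This is not mere bookkeeping: the statement as written genuinely fails for even $d$ (e.g.\ $s_2(x)=\tfrac{x^2}{2}-\tfrac98$ gives $s_2(1/2)=-1$, while $\sigma(1/2+3/2)=\sigma(2)=1$), so the even track of your two parallel inductions has a false base case at $d=2$ and cannot close. You must either restrict to odd $d$ (the only case the paper actually uses downstream) or carry the factor $(-1)^{d-1}$.

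Two further repairs. If you step by $2$ to preserve signs, the relevant recurrence is the second difference $\delta^2 s_{d+2}=s_d$ (and indeed $\delta^2\sigma(x)=\sigma(x+3)=-\sigma(x)$ makes the shift $+\tfrac32(d-1)$ consistent along each parity class); but a second-order difference equation pins down $s_{d+2}$ only modulo an \emph{affine} function of $m$, not ``a single additive constant'', so you need two anchors, not one. And your proposed anchor is shakier than you suggest: at the endpoint $m=\tfrac{d+2}{2}$ every term $c_{2j+\epsilon}$ of the closed form contributes, not just the leading one (e.g.\ $s_3(2)=-c_1(2)+c_3(2)=-2+1=-1$). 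The cleaner constant-fix is the paper's: the discrepancy is constant (or affine) in $m$, both sequences are valued in $\{-1,0,1\}$, and $\sigma$ attains both $+1$ and $-1$ on the range for $d\geq 3$, which forces the discrepancy to vanish; the tiny cases are checked by hand.
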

\begin{proof}
    This proof rephrases the definition of $s_d$, which was given in terms of the doubly periodic sequence $\rho(m,d)$ in \cite{doyle2022polynomialsrationalpreperiodicpoints}, in a more intuitive way. Consider the following table, where each row is 6-periodic and each column is 4-periodic:
    \begin{center}
        \begin{tabular}{c|c|c|c|c|c|c|c|c|c|c|c|c|c|c|c}
             \diagbox{$d$}{$m$} & $\cdots$ & $-3 $& $-\frac{5}{2}$ & $-2$ & $-\frac{3}{2}$ & $-1$ & $-\frac{1}{2}$ & 0 & $\frac{1}{2}$ & 1 & $\frac{3}{2}$ & 2 & $\frac{5}{2}$ & 3 & $\cdots$ \\ \hline \hline
               
            1 & & 0 & & $-1$ & & $-1$ & & 0 & & 1 & & 1 & & 0 &\\ \hline

            2 & & & $1$ & & 0 & & $-1$ & & $-1$ & & 0 & & $1$ & \\ \hline
            
            3 & & 0 & & $1$ & & $1$ & & 0 & & $-1$ & & $-1$ & & 0 &\\ \hline

            4 & & & $-1$ & & 0 & & $1$ & & $1$ & & 0 & & $-1$ & \\ \hline

            5 & & 0 & & $1$ & & $1$ & & 0 & & $-1$ & & $-1$ & & 0 &\\ 

        \end{tabular}
    \end{center}
    Note that applying the central difference operator $\delta$ to the $(d+1)$-th row gives the $d$-th row. Unpacking the definition given in \cite{doyle2022polynomialsrationalpreperiodicpoints}, it is straightforward to check that $\rho(m,d) = \sigma(m+d-2)$. Since $s_d$ is defined by the interpolation $s_d(m-\frac{d+1}{2}) = \rho(m,d)$ on integers $0\leq m\leq d+1$, and the $d$-th row is given by values of the function $\sigma(m+\frac{3}{2}(d-1))$, it follows that $s_d$ interpolates the centremost $d+2$ values of the $d$-th row above.
    
\end{proof}
The family $s_d$ hence serves as an example of a family of polynomials with small range (i.e. within $-1,0,1$) over an interval of size $d+2$; in fact, $s_d$ has a linearly (in $d$) bounded range over an interval of size $d+6$. The transformation from $s_d$ to $r_d$ ensures that $r_d$ is integer-valued on $\mathbb{Z}$ and dynamically compresses the interval $[d+6]$.

\section{Estimates for $s_d$}

In this section we prove a series of estimates of $s_d$ which allow us to study the H\'enon maps of Section \ref{section:henon}.  As an application of these estimates to one-dimensional dynamics, we will also see in Subsection \ref{subsection:ratl_preper_rd} that the dynamically compressing maps $r_d$ have no rational preperiodic points outside the set of integers they compress; that is, the bounds given for the explicit examples of Section 4 of \cite{doyle2022polynomialsrationalpreperiodicpoints} are sharp.

\subsection{Local uniform convergence of $s_d$}

In this subsection we establish the local uniform convergence of subsequences of $s_d$ to the trigonometric functions they approximate.

\begin{proposition}
    $(-1)^d s_{2d+1}(z)$ locally uniformly converges to the sine function $\frac{2}{\sqrt{3}}\sin\left(\frac{\pi z}{3}\right)$ as $d\to \infty$ on the complex plane; that is,     
    $$ \sum_{j=0}^{\infty}(-1)^j c_{2j+1}(z) = \frac{2}{\sqrt{3}}\sin\left(\frac{\pi z}{3}\right)$$
    locally uniformly on $z\in \mathbb{C}$.
    \label{lem:odd_sd_unifcvg}
\end{proposition}
\begin{proof}
    Observe that
		\begin{align*}
			(-1)^j c_{2j+1}(z) 
			&= (-1)^j z \frac{(z+1)(z+2)\cdots(z+j)(z-1)(z-2)\cdots (z-j)}{(2j+1)!} \\
			&= z \frac{(1-z)(2-z)\cdots (j-z)(1+z)(2+z)\cdots (j+z)}{(\frac{3}{2})(\frac{5}{2})\cdots(\frac{2j+1}{2})}\frac{1}{4^j j!} \\
			&= z \frac{(1-z)^{(j)}(1+z)^{(j)}}{(3/2)^{(j)}}\frac{1}{4^j j!},
		\end{align*}
		where $q^{(j)} := q(q+1)\cdots (q+j-1)$ denotes the rising factorial. Hence the infinite sum of these functions are
		$$ \sum_{j=0}^{\infty}(-1)^j c_{2j+1}(z)
		 = z\sum_{j=0}^{\infty} \frac{(1-z)^{(j)}(1+z)^{(j)}}{(3/2)^{(j)}}\frac{1}{4^j j!}
		= z{}_2F_1\left(1-z,1+z;\frac{3}{2};\frac{1}{4}\right), $$
		where ${}_2F_1$ is the \textit{Gaussian hypergeometric function} defined by
		$$ {}_2F_1(a,b;c;z) = \sum_{n=0}^{\infty}\frac{a^{(n)}b^{(n)}}{c^{(n)}}\frac{z^n}{n!} $$
		for $c\not\in \mathbb{Z}_{\leq 0}$ and $|z| < 1$. Note that the ratios between consecutive terms converge to
        $$ \frac{(a+n)(b+n)z}{(c+n)n} \to z,$$
        which implies that the series converges uniformly on compact subsets of
        $$ \left\{ (a,b,c,z)\in \mathbb{C}^4: c\not\in \mathbb{Z}_{\leq 0},\ |z|<1\right\}. $$
        We use formula 15.1.16 of \cite{Abramowitz}:
		$$ {}_2F_1\left(a,2-a;\frac{3}{2};\sin^2 w\right) = \frac{\sin( (2a-2)w)}{(a-1)\sin(2w)}. $$
		Putting $a=1-z$ and $w=\pi/6$, we get the desired result of
		$$ \sum_{j=0}^{\infty}(-1)^j c_{2j+1}(z)
		 = z{}_2F_1\left(1-z,1+z;\frac{3}{2};\frac{1}{4}\right) = \frac{2}{\sqrt{3}}\sin\left(\frac{\pi z}{3}\right), $$ 
         where the convergence is locally uniform in $z\in \mathbb{C}$.
\end{proof}

Since all $s_d$ and the sine function are holomorphic, it follows from standard results in complex analysis that the derivative of $(-1)^d s_{2d+1}(z)$ also locally converges to the derivative of the sine wave. A similar argument shows that $(-1)^d s_{2d}(z)$ and its derivative locally uniformly converge to $\frac{2}{\sqrt{3}}\cos(\frac{\pi z}{3})$ and its derivative on the complex plane.

\subsection{Tail behaviour of $s_d$}
We now work with $s_d$ as functions of a real (rather than complex) variable and show that $s_d$ is strictly increasing on the real interval $[\frac{d+3}{2},\infty)$; this is achieved by first showing that $s_d$ is strictly increasing on $[\frac{d+3}{2},\frac{d+5}{2}]$ and then applying a double induction argument.

For our analysis, we  need some explicit auxiliary bounds on the derivative of $c_d$ on the real line. For convenience, we put $c_{d}(x)$ in the same setting for both even and odd $d$ by use of the generalized binomial coefficient, which we denote 
$$ \tilde{c}_d(x):= \frac{1}{d!}x(x-1)(x-2)\cdots (x-d+1), $$
noting that $c_{d}(x) = \tilde{c}_d(x+\frac{d-1}{2})$.

 \begin{lemma}
    $|c_{d}'(x)| \leq 1 $ for $|x|\leq \frac{d-1}{2}$.
\label{lem:first_cd_derivative_bound}
\end{lemma}
\begin{proof}
    We prove that $|\tilde{c}_d'(x)| \leq  1$ on $0\leq x\leq d-1$. By the product rule, we have the following expression:
    $$ \tilde{c}_d'(x) = \frac{1}{d!}\sum_{0\leq j< d}^{}
    \ \prod_{0\leq i < d,i\neq j}^{}(x-i). $$
    By the triangle inequality,
    $$ |\tilde{c}_d'(x)| 
    \leq \frac{1}{d!}\sum_{0\leq j< d}^{}
    \ \prod_{0\leq i < d,i\neq j}^{} |x-i| 
    \leq \frac{1}{d!}\sum_{0\leq j< d}^{} (d-1)! = 1. $$
    
\end{proof}
	 
\begin{lemma}
    $|c_{d}'(x)| \leq  \frac{6+4\log d}{d(d-1)}$ for $|x|\leq \frac{d-3}{2}$.
\label{lem:second_cd_derivative_bound}
\end{lemma}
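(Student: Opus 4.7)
The plan is to run the same decomposition used in the proof of Lemma \ref{lem:first_cd_derivative_bound}, but now on the shorter interval $|x|\le\frac{d-3}{2}$, which after the shift $y = x + \frac{d-1}{2}$ corresponds to $y\in[1,d-2]$. The key observation is that two quantitative inputs to that proof improve on this smaller interval: the pointwise bound on $|\tilde c_d|$ replaces Lemma \ref{lem:first_cd_bound} with the tighter Lemma \ref{lem:second_cd_bound}, and the boundary-term denominator $\binom{d}{N+1}$ is now bounded below by $\binom{d}{2} = \frac{d(d-1)}{2}$ rather than just $d$.

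First I would dispose of the integer case. If $y=k$ is an integer in $\{1,\dots,d-2\}$, then only the single term $j=k$ in the product-rule expansion of $\tilde c_d'(y)$ survives, and one checks directly that $|\tilde c_d'(k)| = 1/\bigl(d\binom{d-1}{k}\bigr) \le 1/(d(d-1))$, which is comfortably below $(\log d+3)/(d(d-1))$. For non-integer $y$ I would fix $N$ with $1\le N\le d-3$ and write $N<y<N+1$, then split the sum $\tilde c_d'(y) = \frac{1}{d!}\sum_{j}\prod_{i\neq j}(y-i)$ into the ``interior'' terms $j\neq N,N+1$ and the two ``boundary'' terms $j=N,N+1$, exactly as in Lemma \ref{lem:first_cd_derivative_bound}.

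For the interior terms, the same factor-by-factor comparison with the full product $|\tilde c_d(y)|$ yields the harmonic-sum estimate
\[
\sum_{j=0}^{N-1}\tfrac{1}{N-j} + \sum_{j=N+2}^{d-1}\tfrac{1}{j-N-1} = H_N + H_{d-N-2} < 2\log d + 2,
\]
and I would now bound $|\tilde c_d(y)|$ by $\frac{1}{2d(d-1)}$ using Lemma \ref{lem:second_cd_bound}, producing a contribution of at most $\frac{\log d+1}{d(d-1)}$. For the two boundary terms, the telescoping $(N+1-y)+(y-N)=1$ leaves a clean contribution of $\frac{(d-N-1)!(N+1)!}{d!} = 1/\binom{d}{N+1}$; since $2\le N+1\le d-2$, this is maximised by $1/\binom{d}{2}=\frac{2}{d(d-1)}$.

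Adding the two contributions gives
\[
|\tilde c_d'(y)| < \frac{\log d+1}{d(d-1)} + \frac{2}{d(d-1)} = \frac{\log d+3}{d(d-1)},
\]
which is the required bound. No real obstacle should arise — the entire argument is a bookkeeping exercise paralleling the previous lemma — but one should be careful to verify the endpoint constraints on $N$ so that the better binomial estimate is legitimate (that is, that $N+1\ge 2$ and $N+1\le d-2$), since this is precisely what allows the $1/d$ savings over the analogous bound of Lemma \ref{lem:first_cd_derivative_bound}.
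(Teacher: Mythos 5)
Your proposal is correct and follows essentially the same route as the paper's own proof: the same split into the integer case and the $N<y<N+1$ case, the same decomposition into interior terms controlled by $H_N+H_{d-N-2}<2\log d+2$ times the tighter $|\tilde c_d|\le\frac{1}{2d(d-1)}$ bound from Lemma \ref{lem:second_cd_bound}, and the same boundary-term estimate $1/\binom{d}{N+1}\le 2/(d(d-1))$ coming from $2\le N+1\le d-2$. No gaps.
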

\begin{proof}
    We prove that $|\tilde{c}_d'(x)| \leq \frac{6+4\log d}{d(d-1)} $ on $1\leq x\leq d-2$. By continuity of $\tilde{c}_d'$, it suffices to consider non-integer $x$. We carry on from the expression above:
    $$ |\tilde{c}_d'(x)| 
    \leq \frac{1}{d!} \sum_{0\leq j< d}^{}
    \ \prod_{0\leq i < d,i\neq j}^{} |x-i|.$$
    Fix an integer $1\leq N<d-2$ and take $N<x< N+1$. Note that for $j< N$,
    $$ \prod_{i\neq j}^{} |x-i| \leq \frac{(N+1)!(d-N-1)!}{x-j}
    \leq \frac{(N+1)!(d-N-1)!}{N-j}.$$
    Similarly for $j> N+1$, 
    $$ \prod_{i\neq j}^{} |x-i| \leq \frac{(N+1)!(d-N-1)!}{j-x}
    \leq \frac{(N+1)!(d-N-1)!}{j-N-1}.$$
    For $j=N$ and $N+1$, we simply get
    $$ \sum_{j=N,N+1}^{}
    \ \prod_{i\neq j}^{} |x-i| \leq ((x-N)+(N+1-x))\cdot \prod_{i\neq N,N+1}|x-i|\leq (N+1)!(d-N-1)!.$$
    Combining these bounds, we obtain:
    \begin{align*}
        |\tilde{c}_d'(x)| &\leq \frac{1}{d!}(N+1)!(d-N-1)!(1+H_{N-1}+H_{d-N}) \\
        &\leq \frac{1+2H_d}{ {}_{d}C_{N+1}} \leq \frac{1+2(\log d+1)}{d(d-1)/2} = \frac{6+4\log d}{d(d-1)},
    \end{align*}
    where $H_n =1+1/2+\cdots+1/n < \log n + 1$.
\end{proof}

\begin{lemma}
    $s_d(x)$ is strictly increasing on $\frac{d+3}{2}\leq x \leq \frac{d+5}{2}$ for all $d\geq 1$.

\label{lem:sd_increasing_intvofsize1}
 \end{lemma}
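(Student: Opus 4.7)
\medskip

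\noindent\textbf{Proof plan.} The goal is to show that $s_d'(x)>0$ for every $x\in I:=[\tfrac{d+3}{2},\tfrac{d+5}{2}]$, from which strict monotonicity follows immediately. My approach is to combine an average bound on $s_d'$ coming from the central difference identity with a monotonicity statement for $s_d'$ itself.

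\medskip

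\noindent\emph{Step 1: Average bound on $s_d'$.} By the fundamental theorem of calculus and the central difference relation $\delta s_d = s_{d-1}$,
\[
\int_{(d+3)/2}^{(d+5)/2} s_d'(t)\,dt \;=\; s_d\!\left(\tfrac{d+5}{2}\right)-s_d\!\left(\tfrac{d+3}{2}\right) \;=\; s_{d-1}\!\left(\tfrac{d+4}{2}\right).
\]
Invoking Lemma 4.2(5) of \cite{doyle2022polynomialsrationalpreperiodicpoints}, one has $s_{d-1}(\tfrac{d+4}{2})\geq d$, so the average of $s_d'$ on $I$ is at least $d$. In particular $s_d'$ is positive at some point of $I$.

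\medskip

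\noindent\emph{Step 2: Monotonicity of $s_d'$ on $I$.} I would next show that $s_d''(x)>0$ on $I$, which would force $s_d'$ to be increasing on $I$ and thus attain its minimum at the left endpoint $\tfrac{d+3}{2}$. Combined with Step 1, this would give $s_d'(\tfrac{d+3}{2})>0$ and hence $s_d'>0$ throughout $I$. I would approach $s_d''>0$ by returning to the explicit expression $s_d(x)=\sum_{j=0}^{k}(-1)^{k-j}c_{2j+1}(x)$ (and its even analogue), isolating the leading term $c_d''$ at the endpoints of $I$, where a factorial telescope gives the identity $c_d(\tfrac{d+3}{2})=d+1$ and a similar closed form for $c_d(\tfrac{d+5}{2})$; one can extract from these explicit derivative expressions for $c_d''$ at $\tfrac{d+3}{2}$ and $\tfrac{d+5}{2}$ that grow like a power of $d$. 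The remaining terms with $j<k$ should then be controlled as a smaller perturbation.

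\medskip

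\noindent\emph{Expected difficulties.} The principal obstacle is that the bounds of Lemmas \ref{lem:first_cd_derivative_bound} and \ref{lem:second_cd_derivative_bound} require $|x|$ to be small relative to the degree, whereas on $I$ we have $|x|$ near the maximum allowed by those lemmas, so the tail terms of the alternating sum cannot be controlled by those estimates directly. Either a refined termwise factorial evaluation at $\tfrac{d+3}{2},\tfrac{d+5}{2}$ together with careful cancellation of the alternating sum, or an induction on $d$ using the differentiated central difference relation $s_d''(x+\tfrac12)-s_d''(x-\tfrac12)=s_{d-1}''(x)$ together with small-$d$ base cases, seems needed to close Step 2. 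Given the brevity expected of a lemma preceding a double induction, I would aim for the direct factorial-evaluation route, and fall back on induction only if needed.
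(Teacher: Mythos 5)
Your Step 1 is correct: $s_d(\tfrac{d+5}{2})-s_d(\tfrac{d+3}{2})=\delta s_d(\tfrac{d+4}{2})=s_{d-1}(\tfrac{d+4}{2})\ge d$, so the average of $s_d'$ over $I$ is at least $d$. But the way you combine it with Step 2 is a genuine logical gap. If $s_d'$ is increasing on $I$, its minimum is indeed at the left endpoint, but knowing only that its \emph{average} over $I$ is large tells you nothing about that minimum: an increasing function can have average $d$ on an interval of length $1$ while still being negative at the left endpoint, since the value there is bounded \emph{above} by the average, not below. So Step 1 together with $s_d''>0$ would give $s_d'>0$ only on some right-hand portion of $I$, not on all of $I$. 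To close the argument along these lines you would need either an \emph{upper} bound on $\int_I s_d''$ (the oscillation of $s_d'$ across $I$) strictly less than $d$, or positivity of $s_d'$ at some point at or to the left of $\tfrac{d+3}{2}$ combined with monotonicity of $s_d'$ from that point onward; both are different, and harder, estimates than the one you propose.

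Moreover, Step 2 itself is only a plan: you have not established $s_d''>0$ on $I$, and doing so is at least as delicate as the lemma itself. The difficulty you flag at the end is real, but its resolution in the paper goes the opposite way from what you anticipate. The paper writes $s_d'(x)$ as the bounded trigonometric limit plus the series tail $c_{d+2}'(x)-c_{d+4}'(x)+c_{d+6}'(x)-\cdots$, observes that Lemmas \ref{lem:first_cd_derivative_bound} and \ref{lem:second_cd_derivative_bound} \emph{do} apply on $I$ to every term from $c_{d+6}'$ onward (their degrees exceed $d+5$, so $|x|\le\tfrac{d+5}{2}$ is within their range), giving a tail bound less than $1$ for $d\ge 10$, and then shows by a direct product-rule and factorial evaluation that the two terms outside the range of those lemmas satisfy $c_{d+2}'(x)-c_{d+4}'(x)>(d+1)\log d-d-4$ on $I$. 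In other words, the uncontrollable terms are not an error to be absorbed; they are the dominant positive contribution that forces $s_d'>0$. The cases $d\le 9$ are checked numerically. If you pursued your route, you would need essentially the same explicit evaluation of $c_{d+2}'$ and $c_{d+4}'$ (and their second derivatives) anyway, at which point the paper's one-step lower bound on $s_d'$ is shorter than establishing convexity plus an oscillation bound.
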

 \begin{proof}
    For $d\leq 44$, this is verified by numerical calculation, so it suffices to prove that $s_d'(x) > 0 $ on $[\frac{d+3}{2},\frac{d+5}{2}]$ for all $d> 44$. 
    
    We established that
    $$ s_d'(x) - c_{d+2}'(x) + c_{d+4}'(x) - c_{d+6}'(x) + \cdots =  \begin{cases}
        \frac{2\pi}{3\sqrt{3}}\cos\left(\frac{\pi x}{3}\right) & d \text{ odd} \\
        -\frac{2\pi}{3\sqrt{3}}\sin\left(\frac{\pi x}{3}\right) & d \text{ even}
    \end{cases} $$
    where the sum converges uniformly on $\frac{d+3}{2}\leq x \leq \frac{d+5}{2}$. Moreover note that 
    \begin{align*}
        |c_{d+6}'(x)-c_{d+8}'(x)+\cdots|
         &\leq |c_{d+6}'(x)| + \sum_{k=4}^{\infty}|c_{d+2k}'(x)|\\
         &< 1 + \sum_{k=4}^{\infty}\frac{6+4\log(d+2k)}{(d+2k)(d+2k-1)} =: E_d	 
    \end{align*}
    by Lemmas \ref{lem:first_cd_derivative_bound} and \ref{lem:second_cd_derivative_bound}. Since each of the summands are decreasing in $d$ and numerical calculation gives $E_d<2$ for $d= 44$, it follows that $E_d<2$ for all $d> 44$. Hence it suffices to show that
    $$ c_{d+2}'(x) - c_{d+4}'(x) > \frac{2\pi}{3\sqrt{3}} + 2 $$
    on $\frac{d+3}{2}\leq x \leq \frac{d+5}{2}$. Equivalently, it suffices to show that $$ \tilde{c}_{d+2}'(x)-\tilde{c}_{d+4}'(x+1) > \frac{2\pi}{3\sqrt{3}} + 2 $$
    on $d+3\leq x\leq d+4$. Expanding the LHS expression with the product rule,
    
    \begin{align*}
        &\tilde{c}_{d+2}'(x) - \tilde{c}_{d+4}'(x+1) \\
        &= \frac{d}{dx}\frac{1}{(d+2)!}\left( x(x-1)\cdots (x-d-1) \left(1-\frac{(x+1)(x-d-2)}{(d+3)(d+4)}\right) \right) \\
        &= \frac{1}{(d+2)!}\left(1-\frac{(x+1)(x-d-2)}{(d+3)(d+4)}\right)
        \sum_{j=0}^{d+1}\prod_{i\neq j,0\leq i\leq d+1}^{}(x-i) \\
        &-\frac{1}{(d+2)!}x(x-1)\cdots (x-d-1)\left(\frac{2x-d-1}{(d+3)(d+4)}\right).
    \end{align*}
    We can bound the expressions above on $d+3\leq x\leq d+4$ as follows:
    \begin{itemize}
        \item By putting $x=d+4$,
        $$ 1-\frac{(x+1)(x-d-2)}{(d+3)(d+4)} \geq 1 - \frac{2(d+5)}{(d+3)(d+4)}
        \geq\frac{d}{d+3}. $$
        \item By putting $x=d+3$,
        \begin{align*}
            \sum_{j=0}^{d+1}\prod_{i\neq j,0\leq i\leq d+1}^{}(x-i) 
            &\geq (d+3)!\left(\frac{1}{2}+\frac{1}{3}+\cdots + \frac{1}{d+3}\right)
            > (d+3)! \left(\log d -1/2 \right),
        \end{align*}
        where we use the lower bound $H_d > \log d + 1/2$.
        \item By putting $x=d+4$,
        $$ \frac{1}{(d+2)!}x(x-1)\cdots (x-d-1) 
        \leq \frac{(d+4)(d+3)}{2}. $$
        \item By putting $x=d+4$,
        $$ \frac{2x-d-1}{(d+3)(d+4)} \leq \frac{d+7}{(d+3)(d+4)} < \frac{2}{d+3}. $$
        
    \end{itemize}
    Combining all of these bounds, we get the following bound:
    \begin{align*}
        \tilde{c}_{d+2}'(x) - \tilde{c}_{d+4}'(x+1) 
        &> d\left(\log d-1/2\right) - d-4
    \end{align*}
    on $d+3\leq x\leq d+4$. Since $d\left(\log d-1/2\right) - d-4 > \frac{2\pi}{3\sqrt{3}}+2$ for all $d> 44$, this proves the lemma.
 \end{proof}

 \begin{proposition}
    $s_{d}(x)$ is strictly increasing on $x \geq \frac{d+3}{2}$ for all $d\geq 1$.

    \label{prop_sd_strict_increasing}
 \end{proposition}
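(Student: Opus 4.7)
The plan is a double induction: an outer induction on the degree $d$ combined with an inner induction that extends strict positivity of $s_d'$ one unit at a time to the right of the base interval $[\tfrac{d+3}{2}, \tfrac{d+5}{2}]$, on which Lemma \ref{lem:sd_increasing_intvofsize1} already supplies $s_d'>0$. The outer induction starts from $s_1(x)=x$, which has derivative $1$ everywhere, together with direct verification for the first few small degrees.

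The linking identity comes from differentiating the defining recurrence $\delta s_d(x) = s_{d-1}(x)$, yielding
$$ s_d'(y) \;=\; s_d'(y-1) \;+\; s_{d-1}'\!\left(y - \tfrac{1}{2}\right). $$
Fix $d\geq 2$ and assume as the outer hypothesis that $s_{d-1}'(t) > 0$ for all $t \geq \tfrac{d+2}{2}$. For the inner induction I take the hypothesis $H_k$: $s_d' > 0$ on $[\tfrac{d+3}{2}, \tfrac{d+3}{2} + k + 1]$, with base case $H_0$ given by Lemma \ref{lem:sd_increasing_intvofsize1}. Assuming $H_k$, for any $y \in [\tfrac{d+3}{2}+k+1, \tfrac{d+3}{2}+k+2]$ the point $y-1$ lies in the interval of $H_k$, so $s_d'(y-1)>0$, while $y - \tfrac{1}{2} \geq \tfrac{d+4}{2} > \tfrac{d+2}{2}$ lies in the outer hypothesis range, so $s_{d-1}'(y-\tfrac{1}{2})>0$. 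The displayed identity then expresses $s_d'(y)$ as a sum of two strictly positive terms, advancing the inner induction to $H_{k+1}$. Iterating in $k$ gives $s_d' > 0$ on the whole of $[\tfrac{d+3}{2}, \infty)$, which closes the outer induction.

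The main obstacle is essentially clerical: one has to track the half-integer shifts carefully and confirm that every point appearing on the right-hand side of the derivative identity falls in an interval already covered by either the inner or the outer hypothesis. Once the interval bookkeeping is set up, the induction closes without introducing any new analytic estimates beyond those in Lemma \ref{lem:sd_increasing_intvofsize1} and (implicitly, through its proof) the bounds on $c_d$ and $c_d'$ established earlier in this section.
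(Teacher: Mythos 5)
Your argument is correct and is essentially the paper's proof: the paper runs the identical double induction (outer on $d$, inner marching one unit to the right at a time from the base interval supplied by Lemma \ref{lem:sd_increasing_intvofsize1}) but at the level of function values, comparing $s_d(x)<s_d(y)$ via $s_d(y)=s_d(y-1)+s_{d-1}(y-\tfrac{1}{2})$ rather than differentiating that recurrence. The one point to watch is that Lemma \ref{lem:sd_increasing_intvofsize1} asserts strict monotonicity on $[\tfrac{d+3}{2},\tfrac{d+5}{2}]$ rather than $s_d'>0$ there (its proof establishes the derivative bound only for $d\geq 10$, with a numerical monotonicity check for $d\leq 9$), so your base case $H_0$ requires that slightly stronger verification for small $d$ --- or one can stay at the level of values, as the paper does, and sidestep the issue entirely.
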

 \begin{proof}
 We proceed by induction on $d$. Claim is true for $d=1$ since $s_d(x) = x$. Suppose the claim is true for $d-1$. We already established that $s_d(x)$ is increasing on $[\frac{d+3}{2},\frac{d+5}{2}]$ by Lemma \ref{lem:sd_increasing_intvofsize1}. For $\frac{d+5}{2}\leq x<y\leq \frac{d+7}{2}$, observe that
    \begin{align*}
        s_d(y) &= s_d(y-1) + s_{d-1}(y-1/2) \\
        &> s_d(x-1) + s_{d-1}(x-1/2) = s_d(x),
    \end{align*}
    since $\delta s_{d} = s_{d-1}$ holds by definition, and $s_{d-1}$ is strictly increasing on $[\frac{d+2}{2},\infty)$ by induction hypothesis. Thus $s_d$ is strictly increasing on $[\frac{d+5}{2},\frac{d+7}{2}]$. Repeating this process for $[\frac{d+7}{2},\frac{d+9}{2}]$ and so on, we conclude that $s_d$ is strictly increasing on $[\frac{d+3}{2},\infty)$. Hence the claim is true for $d$. By induction, the claim is true for all $d$.
 \end{proof}

We conclude this subsection with a lower bound of $s_d$ on some interval in the integers. This result will be useful in proving the boundedness of $K(r_d)$ and understanding the preperiodic points of $r_d$, and will also be used in Section 4 when analyzing the periodic points of the H\'enon map $h_d$.

\begin{lemma}
    For all $d\geq 3$ and $x\in \mathbb{Z} + \frac{d+1}{2}$, we have $ s_d(x) \geq 3x$ for all $x\geq \frac{d+7}{2}$.

    \label{lem:3x_lowerbound_on_sd}
\end{lemma}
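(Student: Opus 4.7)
The plan is to prove the lemma by induction on $d$, with an inner induction on $x$ for each fixed $d$. The base case $d=3$ follows from the direct factorization
\[
s_3(x) - 3x = \frac{x(x-5)(x+5)}{6},
\]
which is nonnegative on integers $x \geq 5$. For $d \geq 4$, I assume inductively that the lemma holds for $d-1$. The inner inductive step on $x$ uses the telescoping identity $s_d(x+1) - s_d(x) = s_{d-1}(x + 1/2)$, an immediate consequence of $\delta s_d = s_{d-1}$. For $x \geq (d+7)/2$ we have $x + 1/2 \geq (d+8)/2 > ((d-1)+7)/2$, so the outer hypothesis applies to $s_{d-1}$ at $x+1/2$ and yields $s_{d-1}(x+1/2) \geq 3(x+1/2) \geq 3$; hence $s_d(x+1) - s_d(x) \geq 3$, propagating the inequality from $x$ to $x+1$.

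This reduces everything to the inner base case $s_d((d+7)/2) \geq 3(d+7)/2$, for which I would compute $s_d((d+7)/2)$ exactly via Newton's forward-difference formula at basepoint $x_0 = -(d+1)/2$:
\[
s_d\!\left(\frac{d+7}{2}\right) = \sum_{k=0}^{d}\binom{d+4}{k}\,\Delta^k s_d\!\left(-\frac{d+1}{2}\right).
\]
By Lemma~\ref{lem:6periodic}, $s_d$ agrees with a shift of $\sigma$ on the interpolation lattice; combined with the elementary identity $\sigma(n+1) - \sigma(n) = \sigma(n+2)$ (a direct check on the sequence $(\ldots,0,1,1,0,-1,-1,\ldots)$), this gives $\Delta^k s_d(-(d+1)/2) = \sigma(a_d + 2k)$ for a constant $a_d$ depending only on $d \bmod 6$. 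The sequence $k \mapsto \sigma(a_d + 2k)$ is period~$3$ with values in $\{-1,0,1\}$.

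Padding the sum to $k \leq d+4$, I write $s_d((d+7)/2) = E - T$, with $E$ the padded (``extended'') sum and $T$ the four-term tail. A roots-of-unity filter on the period-$3$ sequence---whose three values $\{-1,0,1\}$ sum to zero, killing the dominant $2^{d+4}$ term---yields $E = (2/\sqrt{3})\cos\theta(d)$ with $|E| \leq 2/\sqrt{3}<2$; as $E$ is an integer, $E \in \{-1,0,1\}$. A direct inspection shows the four tail $\sigma$-values form the pattern $(0,-1,1,0)$, so $T = -\binom{d+4}{2} + \binom{d+4}{1} = -(d+4)(d+1)/2$. Combining,
\[
s_d\!\left(\frac{d+7}{2}\right) \;=\; E + \frac{(d+4)(d+1)}{2},
\]
and the desired inequality reduces to $d^2 + 2d + 2E - 17 \geq 0$, which holds for all $d \geq 4$ since $E \geq -1$.

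The main obstacle is the inner base case: identifying $\Delta^k s_d(-(d+1)/2)$ with a shift of $\sigma$ and executing the roots-of-unity filter. The shift $a_d$ must be tracked across the six residue classes $d \bmod 6$ (three for each parity), while the tail computation is uniform and supplies the decisive quadratic growth $(d+4)(d+1)/2$ that dominates the linear threshold $3(d+7)/2$ for all $d \geq 4$ (with equality at the outer base case $d=3$).
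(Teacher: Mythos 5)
Your overall strategy matches the paper's: induction on $d$, with the telescoping consequence of $\delta s_d = s_{d-1}$ providing the inductive step. The difference lies in how the anchor value is controlled. The paper starts its telescope one step earlier, at $(d+5)/2$, and cites Doyle--Hyde's Lemma 4.2(5) for the bound $s_d\bigl(\frac{d+5}{2}\bigr)\geq d+1$, which together with the induction hypothesis applied to the intervening $s_{d-1}$ values immediately gives the claim. You instead compute $s_d\bigl(\frac{d+7}{2}\bigr)$ exactly. Your closed form $s_d\bigl(\frac{d+7}{2}\bigr) = E + \frac{(d+4)(d+1)}{2}$ with $E\in\{-1,0,1\}$ is correct (e.g.\ $d=3$: $15 = 1+14$; $d=4$: $21 = 1+20$; $d=5$: $27=0+27$; $d=7$: $43 = -1+44$), and the roots-of-unity filter for $|E|\leq 2/\sqrt3$ is sound since the three period-3 values are a permutation of $\{-1,0,1\}$, so the $2^{d+4}$ term vanishes. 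This is a more self-contained route (no citation needed) and yields a far stronger, quadratic lower bound on $s_d\bigl(\frac{d+7}{2}\bigr)$, though only a linear bound is needed for the lemma.

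There is, however, a gap you explicitly flag but do not close: the assertion that the tail quadruple $(a_{d+1},\dots,a_{d+4})$ is \emph{uniformly} $(0,-1,1,0)$. A priori the shift $a_d$ in $\Delta^k s_d\bigl(-\frac{d+1}{2}\bigr) = \sigma(a_d+2k)$ depends on $d\bmod 6$, so $a_d + 2(d+1) \bmod 6$ could vary with $d$; what makes the tail uniform is that $a_d = 1-2d$, so $a_d + 2(d+1) = 3$ identically. You need to derive that explicit value rather than invoke a "direct inspection." Moreover, be careful reading Lemma~\ref{lem:6periodic}: as written the shift is $+\frac{3}{2}(d-1)$, but a spot check ($s_2(-1/2) = -1$, whereas $\sigma(-\tfrac12+\tfrac32)=\sigma(1)=1$) shows the correct shift is $-\frac{3}{2}(d-1)$, which is what produces $a_d = 1-2d$; using the stated sign literally would give the wrong tail pattern for even $d$. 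Once the shift is fixed and the tail uniformity is verified, your argument is complete and correct. Finally, note your coefficient on the final inequality should yield $2E + d^2 + 2d - 17 \geq 0$; for $d=3$ with $E=1$ this is exactly $0$ (equality, as $s_3(5)=15$), so the outer base case $d=3$ sits consistently at the boundary, and all $d\geq 4$ hold with room to spare.
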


\begin{proof}
    We proceed by induction on $d$. The claim is true for $s_3(x) = \frac{x^3-7x}{6}$. Now take $d\geq 4$ and suppose claim is true for some $d-1$. Take $x=\frac{d+7+2k}{2}$ with integer $k\geq 0$, then by $s_{d-1} = \delta s_d$ and the induction hypothesis we obtain
    \begin{align*}
        s_{d}(x) 
        &= s_{d}\left(\frac{d+5}{2}\right) + \sum_{j=0}^{k}s_{d-1}\left(\frac{d+6+2j}{2}\right) \\
        &
        \geq s_{d}\left(\frac{d+5}{2}\right) + \sum_{j=0}^{k}\frac{3}{2}(d+6+2j) 
        = s_{d}\left(\frac{d+5}{2}\right) + \frac{3}{2}(k+1)(d+6+k).
    \end{align*}
    Since $s_d(\frac{d+5}{2})\geq d+1$ by Lemma 4.2(5) of \cite{doyle2022polynomialsrationalpreperiodicpoints}, we have
    \begin{align*}
        s_d(x) - 3x &\geq d+1 + \frac{3}{2}(k+1)(d+6+k) - \frac{3}{2}(d+7+2k) \\
        & = d-\frac{1}{2}+\frac{3}{2}k(k+d+5) \geq \frac{9}{2} + \frac{3}{2}k(k+9) > 0
    \end{align*}
    for all $k\geq 0$ and $d\geq 4$. Therefore the claim is true for $d$.
\end{proof}

\subsection{Rational preperiodic points of $r_d$} \label{subsection:ratl_preper_rd}

As a consequence of our estimates in the previous subsections, we will show that the effective examples $r_d$ of \cite{doyle2022polynomialsrationalpreperiodicpoints} have no `extra' rational preperiodic points.

\begin{proposition}
    Let $K(f)\subset \mathbb{C}$ denote the filled Julia set of polynomial $f$. We have that $K(r_d)\cap \mathbb{R} \subset (0,d+7)$ for all $d\geq 2$.
    \label{thm:rd_escape_radius}
 \end{proposition}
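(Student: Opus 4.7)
The plan is to show that every real $x_0$ with $x_0 \leq 0$ or $x_0 \geq d+7$ has forward orbit under $r_d$ tending to $\pm\infty$, hence lies outside $K(r_d)$. The three inputs I will use are Lemma~\ref{lem:3x_lowerbound_on_sd}, Proposition~\ref{prop_sd_strict_increasing}, and the parity of $s_d$: $s_d$ is even when $d$ is even and odd when $d$ is odd, immediate from the fact that each $c_{2j}$ is a polynomial in $x^2$ while each $c_{2j+1}$ is $x$ times a polynomial in $x^2$.

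The first step is to upgrade Lemma~\ref{lem:3x_lowerbound_on_sd}, which only bounds $s_d$ on $\mathbb{Z} + \frac{d+1}{2}$, to a real-variable estimate valid for every real $y \geq \frac{d+7}{2}$. Picking $n \in \mathbb{Z}+\frac{d+1}{2}$ with $y-1 \leq n \leq y$ and combining monotonicity (Proposition~\ref{prop_sd_strict_increasing}) with Lemma~\ref{lem:3x_lowerbound_on_sd} yields $s_d(y) \geq s_d(n) \geq 3n \geq 3y-3$.

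For $x \geq d+7$, set $y := x - 3 - \frac{d+1}{2} \geq \frac{d+7}{2}$ and substitute this lower bound into the formula for $r_d$. A direct simplification yields $r_d(x) - x \geq \frac{d+5}{2}$ when $d$ is even and $r_d(x) - x \geq \frac{d-1}{2}$ when $d$ is odd, both strictly positive on the relevant range of $d$. In particular $r_d(x) \geq d+7$, so one can iterate and conclude $r_d^{\circ n}(x) \to +\infty$.

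For $x \leq 0$, I set $y := x - 3 - \frac{d+1}{2} \leq -\frac{d+7}{2}$ and exploit the parity of $s_d$ to transfer the previous estimate. When $d$ is even, $s_d(y) = s_d(-y) \geq -3y-3$, and on substitution this gives $r_d(x) \geq -3x + \frac{3d+19}{2} > d+7$; the orbit lands in the regime of the previous paragraph after a single step. When $d$ is odd, $s_d(y) = -s_d(-y) \leq 3y+3$ produces the upper bound $r_d(x) \leq 2x - \frac{d+3}{2} \leq -\frac{d+3}{2}$, so $r_d(x) \leq x - \frac{d+3}{2}$, the orbit stays in $(-\infty,0]$, and decreases by at least $\frac{d+3}{2}$ per step, hence tends to $-\infty$. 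The only mildly delicate point is the bookkeeping that ensures the additive constants lost in the real-variable upgrade of Lemma~\ref{lem:3x_lowerbound_on_sd} do not wipe out the linear-in-$d$ separation; but because the bounds are applied only on $x \geq d+7$ and $x \leq 0$, the clean gap from the boundary of the compression interval makes everything go through.
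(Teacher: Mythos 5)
Your argument is correct for $d \geq 3$ and is essentially the paper's own proof: both reduce the claim to the integer-point lower bound $s_d(n) \geq 3n$ of Lemma~\ref{lem:3x_lowerbound_on_sd}, upgraded to real arguments via the monotonicity of Proposition~\ref{prop_sd_strict_increasing}, combined with the parity of $s_d$ to handle the negative ray. The one oversight is that Lemma~\ref{lem:3x_lowerbound_on_sd} is stated (and is only true) for $d \geq 3$ --- for instance $s_2(9/2) = 9 < 27/2$, and correspondingly your claimed bound $r_2(9) - 9 \geq 7/2$ fails since $r_2(9) = 11$ --- so the case $d = 2$ needs a separate direct verification (as the paper does: $r_2(x) = \frac{x^2-9x+22}{2}$ satisfies $r_2(x) > x$ for $x \geq 9$ and $r_2(x) \geq 11$ for $x \leq 0$).
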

 \begin{proof}
    It suffices to show that $|r_d(x)| > |x|$ for $x\geq d+7$ and $x\leq 0$. This is true for $d=2$, so wlog assume $d\geq 3$.
    \begin{itemize}
        \item $d$ even: Under conjugation by translation, we must show that
        $$s_d(x) - \frac{d+3}{2} > |x| $$
        for $|x|\geq \frac{d+7}{2}$. By symmetry of $s_d$, it suffices to show this for $x\geq \frac{d+7}{2}$ only. Recall from Lemma \ref{lem:3x_lowerbound_on_sd}  that $s_d(n) \geq 3n$ for all $n\in \mathbb{Z}+1/2$ with $n\geq \frac{d+7}{2}$. Taking any real number $n\leq x< n+1$, we have 
        $$ s_d(x) \geq s_d(n) 
        \geq 3n 
        > x + \frac{d+3}{2}. $$
        Note that we use Proposition \ref{prop_sd_strict_increasing} for $s_d(x)\geq s_d(n)$.
        
        \item $d$ odd: Under conjugation by translation, we must show that $ s_d(x) - x-1 > x $ for $x \geq \frac{d+7}{2}$ and $ s_d(x)-x-1 < x $ for $x\leq -\frac{d+7}{2}$. By symmetry of $s_d$, it suffices to show that
        $$ s_d(x) > 2x + 1. $$
        for $x \geq \frac{d+7}{2}$. Noting that $s_d(n) \geq 3n$ for all integers $n \geq \frac{d+7}{2}$, for any real number $n\leq x < n+1$ we have
        $$ s_d(x) \geq s_d(n) \geq 3n > 2x+1. $$
        Again note that we use Proposition \ref{prop_sd_strict_increasing} for $s_d(x)\geq s_d(n)$.

    \end{itemize}
 \end{proof}

 We now consider the non-Archimedean behavior of $r_d(x)$ to complete our study of the set of rational preperiodic points for $r_d(x)$.  From the closed form for $s_d(x)$ we will show that for any prime $p$, the $p$-adic escape radius of $r_d$ is bounded above by $1$.

 \begin{proposition} Let $d \geq 2$. Let $p$ be a prime, with associated $p$-adic absolute value $|\cdot|_p$.  If $x \in \mathbb{Q}$ with $|x|_p > 1$ then $|r_d(x)|_p > |x|_p$.
 \end{proposition}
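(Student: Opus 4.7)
The plan is to exploit the fact that $r_d$ is integer-valued on $\mathbb{Z}$ by rewriting it in the Newton (binomial) basis and applying the strict form of the ultrametric triangle inequality. This should reduce the claim to a routine comparison of $p$-adic sizes of the terms.

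First I would invoke Polya's classical theorem to write
$$ r_d(x) = \sum_{k=0}^{d} a_k \binom{x}{k} \qquad \text{with } a_k \in \mathbb{Z}. $$
Comparing leading coefficients with the description in Section 2 pins down $a_d = 1$: for $d \geq 2$ the transformation from $s_d$ to $r_d$ (a shift, an additive constant, and in the odd case an additional linear term) only affects strictly lower-order terms, so $r_d$ inherits the leading coefficient $1/d!$ of $s_d$.

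Next, for $|x|_p > 1$ and any integer $j$, the bound $|j|_p \leq 1 < |x|_p$ together with the ultrametric property gives $|x - j|_p = |x|_p$, so
$$ \left|\binom{x}{k}\right|_p = \frac{|x|_p^k}{|k!|_p}. $$
I would then compare $|a_k \binom{x}{k}|_p$ across $k$: using $|a_k|_p \leq 1$ and the fact that $d!/k! = (k+1)(k+2)\cdots d$ is a positive integer (hence has $p$-adic absolute value $\leq 1$), the $k = d$ summand strictly dominates every other in $p$-adic size, since $|x|_p \geq p \geq 2$ forces $|x|_p^{d-k} > |d!/k!|_p$ for $k<d$. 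The strict ultrametric triangle inequality then yields $|r_d(x)|_p = |x|_p^d / |d!|_p$.

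Finally, the desired bound $|r_d(x)|_p > |x|_p$ reduces to $|x|_p^{d-1} > |d!|_p$, which is immediate from $|x|_p \geq p \geq 2$ and $|d!|_p \leq 1$ whenever $d \geq 2$. I do not anticipate any substantive obstacle; the only care needed is verifying $a_d = 1$ in the Newton-basis expansion, which follows directly from the definition of $r_d$ recalled in Section 2.
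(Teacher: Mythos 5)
Your proof is correct, and it takes a genuinely different route from the paper's. The paper argues directly from the product formula for $c_{2k}$ (resp.\ $c_{2k+1}$): since the roots of these factors are half-integers, it needs the hypothesis $|x|_p > |1/2|_p$ to conclude $\bigl|y^2 - \bigl(\tfrac{2j-1}{2}\bigr)^2\bigr|_p = |y|_p^2$, and must then dispose of the leftover case $p=2$, $|x|_2 = 2$ by a separate computation. Your argument sidesteps this entirely: by passing to the Newton basis $\binom{x}{k}$, whose factors have \emph{integer} roots, the identity $|x-j|_p = |x|_p$ holds uniformly for all primes whenever $|x|_p > 1$, so no case split on $p$ is needed. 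The two proofs share the same engine --- identify a single $p$-adically dominant term and apply the strict ultrametric inequality --- but yours is more uniform and in fact proves the more general statement that any integer-valued polynomial of degree $d \geq 2$ whose top Newton coefficient $a_d$ is a $p$-adic unit satisfies $|f(x)|_p \geq |x|_p^d > |x|_p$ on $|x|_p>1$. The two points you flag as needing care do check out: $r_d$ is integer-valued on $\mathbb{Z}$ (stated in Section 2), so Polya's theorem applies; and the leading coefficient of $r_d$ equals that of $s_d$, namely $1/d!$, since the shift and the added constant (and, for $d$ odd, the linear term, using $d \geq 2$) only perturb lower-order coefficients, giving $a_d = 1$. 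One could note explicitly that $|a_k|_p \leq 1$ handles the possibility $a_k = 0$ trivially, but there is no gap.
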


\begin{proof} We first consider the case when $d = 2n$ is even.  Recall then that
$$r_d(x) = s_d\left(x-3-\frac{d+1}{2}\right) + 2,$$
where
$$s_{d}(x) = \sum_{k=0}^{n}(-1)^{n-k}c_{2k}(x)$$
and
$$c_{2k}(x) = \frac{1}{(2k)!}\prod_{j=1}^{k} \left(x^2-\left(\frac{2j-1}{2}\right)^2\right).$$
Suppose that $|x|_p > |1/2|_p,$ and write $y =x-3-\frac{d+1}{2}.$  Then $|y|_p = |x|_p > |1/2|_p$ and so $\left| y^2 - \left( \frac{2j-1}{2} \right)^2 \right|_p = |y|_p^2$, so 
$$|c_{2k}(y)|_p = |y|_p^{2k} |(2k)!|_p^{-1}.$$
Since $|x|_p > 1,$ this is strictly increasing in $k$ and so we see that 
$$|r_d(x)|_p =|s_d(y)|_p = |c_{2n}(x)|_p \geq |y|^{2n}_p = |x|_p^{2n}> |x|_p$$
as claimed.  It remains to deal with the case when $p = 2$ and $|x|_2 = 2.$  In that case, $x - 3 - (d+1)/2$ is an integer, and we have
$$|c_{2k}(x)|_2 = |1/4|^{k}_2 |(2k)!|^{-1}_2.$$
Again this increases in $k$ and so 
$$|r_d(x)|_2 = |s_d(x-3-(d+1)/2)|_2 = |c_{2n}(x-3-(d+1)/2)|_2 = |1/4|^{n}_2 |(2n)!|^{-1}_2 > |x|_2,$$
as desired.

The case when $d$ is odd is a similar computation, noting that we use $d \geq 2$ to ensure that $|s_d(x-3-(d+1)/2)|_p > |x|_p$ whenever $|x|_p > 1.$

\end{proof}

\begin{corollary} Let $d \geq 2$ and $r_d(x)$ as defined above.  Then the set of rational preperiodic points of $r_d(x)$ is precisely ${1, 2, \dots, d+6}$.
\end{corollary}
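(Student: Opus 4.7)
The plan is to combine the two preceding propositions with the fact, established by Doyle--Hyde, that the integers $\{1, 2, \ldots, d+6\}$ are preperiodic for $r_d$. The forward containment is immediate: since $r_d([d+6]) \subseteq [d+6]$, the orbit of any point in this finite set remains in this finite set, so each element of $\{1,2,\dots,d+6\}$ is preperiodic.

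For the reverse containment, suppose $x \in \mathbb{Q}$ is preperiodic for $r_d$. Write $x = a/b$ in lowest terms. If some prime $p$ divides $b$, then $|x|_p > 1$, and by the preceding proposition $|r_d(x)|_p > |x|_p > 1$. Inductively, $|r_d^{\circ n}(x)|_p$ is strictly increasing in $n$, so the forward orbit of $x$ is infinite, contradicting preperiodicity. Hence $b = \pm 1$ and $x \in \mathbb{Z}$.

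Next, since $x$ is preperiodic it lies in the filled Julia set $K(r_d)$, so by Proposition \ref{thm:rd_escape_radius} we have $x \in K(r_d) \cap \mathbb{R} \subset (0, d+7)$. Combined with $x \in \mathbb{Z}$, this forces $x \in \{1, 2, \ldots, d+6\}$, completing the proof.

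The argument is essentially a clean assembly of the two preceding propositions, so there is no substantive obstacle; the only minor point to verify is that any rational preperiodic point is in the filled Julia set, which is standard (its forward orbit is bounded in $\mathbb{C}$, hence it lies in $K(r_d)$).
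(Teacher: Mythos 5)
Your proof is correct and follows essentially the same approach as the paper: use the non-Archimedean escape radius proposition to force any rational preperiodic point to be an integer, use Proposition \ref{thm:rd_escape_radius} to confine it to $(0,d+7)$, and invoke the Doyle--Hyde compression $r_d([d+6]) \subseteq [d+6]$ for the reverse inclusion. Your version just spells out the inductive escape argument that the paper leaves implicit.
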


\begin{proof} By the previous proposition, any rational preperiodic point is a $p$-adic integer for all primes $p$, thus a rational integer.  Since the filled Julia set satisfies $K(r_d) \cap \mathbb{R} \subset (0,d+7)$, the only possible rational preperiodic points lie in the set ${1, 2, \dots, d+6},$ and as $r_d(x)$ maps this set to itself (Theorem 4.4, \cite{doyle2022polynomialsrationalpreperiodicpoints}), this is precisely the set of rational preperiodic points for $r_d(x)$.
\end{proof}

\subsection{Optimality among dynamically compressing polynomials}

In Section 5 of \cite{doyle2022polynomialsrationalpreperiodicpoints}, the authors list integer-valued polynomials of degree $2\leq d\leq 20$ that exhibit dynamical compression of large intervals. The example provided for $d=2$ coincides with the polynomial $r_2$ and compresses $r_2([8])\subseteq [7]$. By elementary means, one can show that this is the optimal choice for dynamical compression in degree $2$ in the following sense: if an integer-valued quadratic polynomial $f$ of degree 2 satisfies $f([m])\subset [m]$, then $m\leq 8$. In fact, $m=8$ is attained by only $r_2(x)=\frac{x^2-9x+22}{2}$ and $r_2(x)+1$.

In contrast, $r_3$ is does not compress the maximal possible range in degree $3$ since it compresses $r_3([9])\subseteq [7]$, while $f(x) =\frac{x^3-18x^2+89x-66}{6}$, which is the example listed in \cite{doyle2022polynomialsrationalpreperiodicpoints}, compresses $f([11])\subseteq [11]$. Nevertheless, it is possible by similar elementary means to show that $f$ compresses the largest interval for degree 3 and is the unique such polynomial.  Unfortunately, the elementary approach does not generalize well even to other small degrees.

\section{Hénon maps with many periodic points} \label{section:henon}

In this section, we study the dynamical behaviour of the following family of H\'{e}non maps:
$$ h_d(x,y) = (y,-x+s_d(y)) $$
for odd $d\geq 3$. Recall that $s_d$ is integer-valued for odd $d$, so $h_d$ (and its inverse) preserves the integral lattice $\mathbb{Z}^2$. This family exhibits many interesting properties, namely having a large number of periodic integer points near the origin with long periodic cycles.  

We are interested in asymptotic results so focus on the case of $d$ odd to simplify the estimates and exposition: similar results will likely hold for some suitably chosen shift in the case of $d$ even.

Note that $h_d$ is conjugate to its inverse:
$$ h_d^{-1}(x,y) = (-y+s_d(x),x) = 
(r\circ h_d \circ r^{-1})(x,y), $$
where $r(x,y) = (y,x)$ is a map that reflects the $x$ and $y$ coordinates. 

\subsection{Escape radius}

In this subsection, we investigate the (archimedean and non-archimedean) filled Julia sets of the Hénon map $h_d$. The \emph{filled Julia set} $K(f)$ of a plane polynomial automorphism $f:\mathbb{C}^2\to \mathbb{C}^2$ is defined to be $K(f) = K^+(f) \cap K^-(f)$ where
    $$ K^{\pm}(f) = \left\{z\in \mathbb{C}^2:\sup_{n\geq 1}\|f^{\pm n}(z)\| < \infty \right\}.$$
In particular, the filled Julia set $K(f)$ contains all periodic points of $f$.

We prove that there exists some $R_v > 0$ for each rational place $v$ such that every periodic point of $h_d$ must satisfy $\max\left\{|x|_v,|y|_v\right\} < R_v$ for all $v$. The proof involves cutting up $\mathbb{Q}^2$ into subsets between which the dynamics of the points are well-understood; it is a similar approach to studying the filtration properties of Hénon maps as demonstrated in \cite{Bedford1991}. The escape radii results will imply that all periodic points of $h_d$ are integer points within a certain box and leads to an upper bound on the number of periodic points on $h_d$ as a quadratic function of $d$.

For each place $v$ of $\mathbb{Q}$, we define the following norm on $\mathbb{Q}^2$:
$$ \|(x,y)\|_{v} = 
    \max\left\{|x|_v,|y|_v\right\}. $$
Let $s\in \mathbb{Q}[x]$ be a non-zero odd polynomial of degree at least two, and define 
$$h(x,y) = (y,-x+s(y)).$$
For each place $v$, choose $R_v > 0$ such that $|s(y)|_v > 2|y|_v$ for all $|y|_v \geq R_v$. 

\begin{lemma}
    For each rational place $v$, consider the sets:
    $$ \mathcal{D}_v 
    = \left\{(x,y) \in \mathbb{Q}^2:
    |y|_v\geq |x|_v \geq  R_v \right\},\ 
    \mathcal{T}_v = \left\{(x,y)\in \mathbb{Q}^2: 
    |x|_v<R_v\leq |y|_v\right\}
    $$
    For all $(x,y)\in \mathcal{D}_v\cup \mathcal{T}_v$, we have $h(x,y)\in \mathcal{D}_v$ and $\|h(x,y)\|_{v} > \|(x,y)\|_{v}$.
\end{lemma}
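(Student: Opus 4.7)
The plan is to observe that once $|y|_v$ exceeds $R_v$, the leading term of $s(y)$ dominates, and the second coordinate $-x + s(y)$ of $h(x,y)$ inherits the size of $s(y)$. Writing $h(x,y) = (x',y')$ with $x' = y$ and $y' = -x + s(y)$, the first coordinate trivially satisfies $|x'|_v = |y|_v \geq R_v$ in both regions $\mathcal{D}_v$ and $\mathcal{T}_v$, so all the work is in estimating $|y'|_v$ from below by $|y|_v$. The goal is to prove the single inequality $|y'|_v > |y|_v$, because this simultaneously gives $|y'|_v \geq |x'|_v$ (so $h(x,y)\in \mathcal{D}_v$) and $\|h(x,y)\|_v = |y'|_v > |y|_v = \|(x,y)\|_v$ (since in both $\mathcal{D}_v$ and $\mathcal{T}_v$ one has $|y|_v \geq |x|_v$, so $\|(x,y)\|_v = |y|_v$).

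The estimate itself splits according to whether $v$ is archimedean or non-archimedean. In the archimedean case I would use the reverse triangle inequality to write $|y'|_v \geq |s(y)|_v - |x|_v$, and then combine this with the defining property $|s(y)|_v > 2|y|_v$ of $R_v$: in $\mathcal{D}_v$ we have $|x|_v \leq |y|_v$ so $|y'|_v > 2|y|_v - |y|_v = |y|_v$, and in $\mathcal{T}_v$ we have $|x|_v < R_v \leq |y|_v$ so the same bound applies. In the non-archimedean case the argument is cleaner: the hypothesis $|s(y)|_v > 2|y|_v \geq |y|_v \geq |x|_v$ (using that $|2|_v \leq 1$ and, for $\mathcal{T}_v$, that $|x|_v < R_v \leq |y|_v$) forces strict inequality $|s(y)|_v > |x|_v$, and the ultrametric inequality gives $|y'|_v = |-x + s(y)|_v = |s(y)|_v > |y|_v$.

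There is no real obstacle; the only mild subtlety is in the non-archimedean $\mathcal{D}_v$ case, where one has to confirm that the hypothesis $|s(y)|_v > 2|y|_v$ really does give $|s(y)|_v > |x|_v$ even though $|x|_v$ could in principle equal $|y|_v$. This is where the strict factor $2$ in the definition of $R_v$ matters: regardless of whether $|2|_v = 1$ or $|2|_v < 1$, the strict inequality $|s(y)|_v > 2|y|_v \geq |y|_v$ beats any $|x|_v \leq |y|_v$, allowing the ultrametric equality $|y'|_v = |s(y)|_v$. Once this is in place, the two cases and the two types of places combine cleanly to yield both claims simultaneously.
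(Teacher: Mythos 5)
Your proof is correct and follows essentially the same route as the paper: reduce both claims to the single estimate $|-x+s(y)|_v > |y|_v$, using $|s(y)|_v > 2|y|_v$ together with $|x|_v \le |y|_v$, which holds in both $\mathcal{D}_v$ and $\mathcal{T}_v$. The paper treats all places uniformly via the reverse triangle inequality $|-x+s(y)|_v \ge |s(y)|_v - |x|_v > 2|y|_v - |x|_v \ge |y|_v$, which is valid for non-archimedean absolute values as well, so your separate ultrametric case (and the aside about $|2|_v$ --- note that $2|y|_v$ here is the real scalar $2$ times the real number $|y|_v$, not $|2y|_v$) is unnecessary but harmless.
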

\begin{proof}
    For point $(x,y)\in\mathcal{D}_v\cup \mathcal{T}_v$, note that $|s(y)|_v >2|y|_v$ and $\|(x,y)\|_v = |y|_v$. Applying the triangle inequality, we obtain:
    \begin{align*}
        |-x + s(y)|_v \geq 2|y|_v - |x|_v  > |y|_v \geq R_v.
    \end{align*}
    Hence $h(x,y)\in \mathcal{D}_v$ and $\|h(x,y)\|_v > \|(x,y)\|_v$.
\end{proof}
The lemma above shows that the $\|\cdot\|_v$-norm of $(x,y)\in\mathcal{D}_v\cup\mathcal{T}_v$ strictly increases under iterations of $h$, which implies that all points $(x,y)\in \mathcal{D}_v\cup\mathcal{T}_v$ are not periodic points of $h$. The following diagram describes the dynamics of points under $h$:
\begin{center}
    \begin{tikzpicture}
        \draw(-0.5, 0) edge[->] (4,0);
        \draw(0,-0.5) edge[->] (0,4);
        \node at (4.3, 0) {$|x|_v$};
        \node at (0, 4.3) {$|y|_v$};
        \draw[dashed] (1,-3pt)--(1,4);
        \draw[dashed] (-3pt,1)--(4,1);
        \draw[dashed] (-3pt,-3pt)--(4,4);
        \node at (1,-0.5) {$R_v$};
        \node at (-0.5,1) {$R_v$};
        \fill[blue, nearly transparent] (3.8,3.8) -- (1,1)--(1,3.8);
        \fill[red, nearly transparent] (1,3.8) -- (1,1) -- (0,1) -- (0,3.8);
        \node at (2, 3.2) {\huge $\circlearrowright$};
        \draw[line width=0.7pt, ->] (0.5, 2.7) to[bend left=60] (1.3, 3.2);
        \node[red] at (0.5, 2.2) {$\mathcal{T}_v$};
        \node[blue] at (1.55, 2.2) {$\mathcal{D}_v$};
    \end{tikzpicture}
\end{center}
Since $h^{-1}=r\circ h\circ r$ for $r(x,y)=(y,x)$, it follows that the $\|\cdot\|_v$-norm of $(x,y)\in r(\mathcal{D}_v\cup\mathcal{T}_v)$ strictly increases under iterations of $h^{-1}$. Therefore any point in $\mathcal{D}_v\cup\mathcal{T}_v\cup r(\mathcal{D}_v\cup\mathcal{T}_v) $ cannot be a periodic point of $h$. Since this set is precisely the complement of the open ball of radius $R_v$ under the $\|\cdot\|_v$-norm, we reach the following conclusion:
\begin{proposition}
    If $(x,y)\in \mathbb{Q}^2$ is a periodic point of $h$, then $\|(x,y)\|_v < R_v$ for all places $v$ of $\mathbb{Q}$.
\end{proposition}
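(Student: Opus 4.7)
The plan is to deduce the proposition directly from the preceding lemma via a simple region decomposition combined with one application of the conjugation $h^{-1} = r \circ h \circ r$.  The first step is a set-theoretic observation: the four sets $\mathcal{D}_v$, $\mathcal{T}_v$, $r(\mathcal{D}_v)$, $r(\mathcal{T}_v)$ together cover the complement of the open $\|\cdot\|_v$-ball of radius $R_v$ in $\mathbb{Q}^2$.  Indeed, $\mathcal{D}_v$ handles the region where $|y|_v \geq |x|_v \geq R_v$, $\mathcal{T}_v$ handles $|x|_v < R_v \leq |y|_v$, and the two reflected sets handle the analogous cases with the roles of $x$ and $y$ exchanged.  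Any $(x,y)$ with $\|(x,y)\|_v \geq R_v$ has at least one coordinate of $v$-absolute value $\geq R_v$, so one of these four descriptions must apply, and it suffices to show that no point in this union is periodic.

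The second step handles $(x,y) \in \mathcal{D}_v \cup \mathcal{T}_v$: the preceding lemma gives $\|h^n(x,y)\|_v$ strictly increasing in $n$, so the forward $h$-orbit is unbounded and $(x,y)$ cannot be periodic.  The third step handles $(x,y) \in r(\mathcal{D}_v \cup \mathcal{T}_v)$ by porting the previous argument through the conjugation, using that $r$ is clearly a $\|\cdot\|_v$-isometry since it merely swaps coordinates.  If $(x,y) \in r(\mathcal{D}_v \cup \mathcal{T}_v)$, then $r(x,y) \in \mathcal{D}_v \cup \mathcal{T}_v$; applying the lemma to $r(x,y)$ yields $\|h(r(x,y))\|_v > \|r(x,y)\|_v$, which the identity $h^{-1} = r \circ h \circ r$ together with the isometry property rewrites as $\|h^{-1}(x,y)\|_v > \|(x,y)\|_v$.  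Iterating $h^{-1}$, the backward orbit of $(x,y)$ under $h$ is unbounded; since a periodic point of $h$ is also periodic for $h^{-1}$, $(x,y)$ cannot be periodic.

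There is no real obstacle — the argument is a clean two-case split after the decomposition.  The only routine check is that the preceding lemma applies uniformly at every place, archimedean or non-archimedean, which it does because its proof invokes only the defining property $|s(y)|_v > 2|y|_v$ of $R_v$ and the triangle inequality (the latter being valid in both its standard and ultrametric forms).
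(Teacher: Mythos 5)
Your proof is correct and takes essentially the same route as the paper: decompose the complement of the open $R_v$-ball as $\mathcal{D}_v\cup\mathcal{T}_v\cup r(\mathcal{D}_v\cup\mathcal{T}_v)$, use the lemma to rule out periodicity on $\mathcal{D}_v\cup\mathcal{T}_v$ via strictly increasing forward norms, and transfer the argument to the reflected region via $h^{-1}=r\circ h\circ r$ and the fact that $r$ is a norm isometry. The only point you leave tacit (as does the paper) is that the lemma's conclusion $h(\mathcal{D}_v\cup\mathcal{T}_v)\subset\mathcal{D}_v$ is what licenses iterating the strict increase, and dually for $h^{-1}$ on $r(\mathcal{D}_v\cup\mathcal{T}_v)$.
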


We now apply this to the case of $s = s_d$ and $h = h_d$ for odd $d \geq 3$. We wish our bounds to depend explicitly on $d$, which amounts to finding for each place some $R_v>0$ such that $|s_d(y)|_v > 2|y|_v$ for all $|y|_v\geq R_v$.

For $v = \infty$, recall from the proof of Theorem \ref{thm:rd_escape_radius} that $s_d(y) > 2y + 1$ for $x\geq \frac{d+7}{2}$. It follows from $s_d$ being an odd function that $|s_d(y)| > 2|y|$ for all $|x|\geq \frac{d+7}{2}$. Hence we can take $R_{\infty} = \frac{d+7}{2}$.

For $v = p$ prime, observe that the leading coefficient of $s_d$ is precisely $1/d!$ and that all coefficients are integer multiplies of $1/d!$. Take $|y|_p > 1 + 2|d!|_p$, then by applying the ultrametric inequality on $|s_d(y)|_p$, we obtain:
$$ |s_d(y)|_p - 2|y|_p \geq \left|\frac{1}{d!}\right|_p|y|_p^d - \left|\frac{1}{d!}\right|_p|y|_p^{d-1} - 2|y|_p^{d-1} 
= |y|_p^{d-1}\frac{|y|_p - 1-2|d!|_p}{|d!|_p} > 0.$$
Hence we can take $R_p = 1 + 3|d!|_p$. Using these results, we conclude:

\begin{corollary}
For odd $d\geq 3$, every rational periodic point $(x,y)$ of $h_d$ is an integer point satisfying $\max\left\{|x|,|y|\right\} \leq \frac{d+5}{2}$.
\end{corollary}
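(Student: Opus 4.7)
\smallskip

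My plan is to apply the preceding proposition at every place of $\mathbb{Q}$ simultaneously: the archimedean estimate will give the explicit bound, while the non-archimedean estimates will force the coordinates to be (global) integers. Let $(x,y) \in \mathbb{Q}^2$ be a rational periodic point of $h_d$. By the proposition, $\|(x,y)\|_v < R_v$ for every place $v$.

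\smallskip

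For a finite place $v = p$, the quantity $|y|_p$ lies in $\{0\} \cup p^{\mathbb{Z}}$, so the continuous bound $|y|_p < 1 + 3|d!|_p$ must collapse to the discrete bound $|y|_p \le 1$. I would verify this in two cases. If $p \mid d!$, then $|d!|_p \le 1/p$, so $R_p = 1 + 3|d!|_p \le 1 + 3/p \le \tfrac{5}{2}$ for $p \ge 2$; since the next admissible value above $1$ is $p \ge 2$, the only option is $|y|_p \le 1$ (with the potential borderline $p=2$, $|y|_p = 2$ excluded because $2 \mid d!$ for $d \ge 3$ forces $|d!|_2 \le \tfrac{1}{2}$, giving $R_2 \le \tfrac{5}{2}$, but we can sharpen by noting $v_2(d!) \ge 1$, hence $R_2 < 2$). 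If $p \nmid d!$, then $p > d \ge 3$, so $p \ge 5$, and $R_p = 4 < p$, again forcing $|y|_p \le 1$. The same argument applies to $x$. Thus $x, y$ are $p$-adic integers for every prime $p$, and so $x, y \in \mathbb{Z}$.

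\smallskip

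For $v = \infty$ we have $R_\infty = \tfrac{d+7}{2}$, which is a (positive) integer because $d$ is odd. The proposition gives $|x|, |y| < \tfrac{d+7}{2}$; combined with the integrality just established, this yields $|x|, |y| \le \tfrac{d+7}{2} - 1 = \tfrac{d+5}{2}$, as desired.

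\smallskip

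The only step requiring genuine care is the non-archimedean collapse: one must check that $R_p$ truly sits strictly between two consecutive values of $|\cdot|_p$ at the borderline primes $p = 2, 3$. Since $d \ge 3$ guarantees $v_p(d!) \ge 1$ for $p \in \{2,3\}$, the bound $R_p \le 1 + 3/p$ comfortably lies below $p$, and the argument goes through uniformly. Once this is in place, the corollary follows immediately from combining the integrality and the archimedean estimate.
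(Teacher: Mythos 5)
Your overall strategy is the same as the paper's: use the escape-radius proposition at the archimedean place for the size bound and at the finite places for integrality, then combine strict inequality with integrality to drop from $\tfrac{d+7}{2}$ to $\tfrac{d+5}{2}$. However, there is a genuine arithmetic error at the borderline case $p=2$, $d=3$. You assert that $v_2(d!)\ge 1$ gives $R_2<2$, but $v_2(d!)\ge 1$ only gives $|d!|_2\le \tfrac12$ and hence $R_2=1+3|d!|_2\le \tfrac52$, which is \emph{not} less than $2$. For $d=3$ this is sharp: $|3!|_2=\tfrac12$, so $R_2=\tfrac52$, and the bound $\|(x,y)\|_2<\tfrac52$ does not exclude $|x|_2=2$, i.e.\ half-integer coordinates. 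Your closing claim that ``the bound $R_p\le 1+3/p$ comfortably lies below $p$'' for $p\in\{2,3\}$ is likewise false at $p=2$. This is exactly the exceptional pair $(p,d)=(2,3)$ that the paper singles out and resolves by a separate finite check of all half-integer points with $\max\{|x|,|y|\}<5$ (alternatively, a direct $2$-adic computation with $s_3(x)=\tfrac{x^3-7x}{6}$ shows $|s_3(y)|_2=16$ when $|y|_2=2$, which also closes the gap). For odd $d\ge 5$ your argument is fine, since then $v_2(d!)\ge 3$ and $R_2\le 1+\tfrac38<2$; you should either add that refinement together with the $d=3$ check, or restrict the clean $p$-adic collapse to $d\ge 5$ and treat $d=3$ separately.
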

\begin{proof}
    By $R_{\infty} = \frac{d+7}{2}$, every rational periodic point $(x,y)$ of $h_d$ satisfies $\max\left\{|x|,|y|\right\} < \frac{d+7}{2}$. It remains to show that $(x,y)$ is an integer point. Observe that it suffices to prove $R_p < p$, since $\|(x,y)\|_p < R_p < p$ for all $p$ implies that both $x$ and $y$ are integers. This inequality is true for all $(p,d)$ with the exception of $(p,d)=(2,3)$. For this case, we have $\|(x,y)\|_2 < 1 + 3\cdot|3!|_2 < 2^2$ and thus an exhaustive check of all half-integer points with $\max\left\{|x|,|y|\right\} < \frac{3+7}{2}=5$ is sufficient.
\end{proof}

\subsection{Counting periodic points}
In the previous section, we showed that every rational periodic point of $h_d$ must be an integer point inside the box $\|(x,y)\|_{\infty} \leq \frac{d+5}{2}$. We now turn to show that most integer points inside this box are periodic points of $h_d$.
\begin{theorem}
    Define $R$ to be as follows:
    \begin{equation*}
        R:=\begin{cases}
            \frac{d+1}{2}  & \textnormal{if } d\equiv 1 \mod 6 ,\\
            \frac{d+1}{2} - 2& \textnormal{if } d\equiv 3\mod 6 ,\\
            \frac{d+1}{2} -3 & \textnormal{if } d\equiv 5\mod 6 .
        \end{cases}
    \end{equation*}
    Any integer point $(x,y)\in \mathbb{Z}^2$ with $\|(x,y)\|_{\infty} \leq R$ is periodic under $h_d$.
\end{theorem}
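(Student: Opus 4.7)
The strategy is to show that every orbit of $h_d$ starting at an integer point $(x,y)$ with $\|(x,y)\|_\infty \leq R$ remains bounded; combined with the fact that $h_d$ restricts to a bijection of $\mathbb{Z}^2$ (valid for odd $d$, since $s_d$ is then integer-valued on $\mathbb{Z}$, as are the coefficients of $h_d^{-1}$), this forces such orbits to be periodic.

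The key structural input is Lemma \ref{lem:6periodic}: on integers in $[-(d+1)/2,(d+1)/2]$, the polynomial $s_d$ equals $\varepsilon\sigma$ with $\varepsilon = (-1)^{(d-1)/2}$ and $\sigma$ the $6$-periodic function from the lemma. Hence on the \emph{good region} $\{(x,y):\max(|x|,|y|)\leq (d+1)/2\}$, $h_d$ coincides with the lattice automorphism $h^\varepsilon(x,y) = (y,-x+\varepsilon\sigma(y))$, whose orbit structure depends only on the residue class of $(x,y)$ modulo $6$. A finite enumeration of the $36$ residue classes classifies the integer orbits of both $h^+$ and $h^-$: each is periodic with bounded period, and each admits an explicit maximum $\|\cdot\|_\infty$-norm along the orbit. (For $h^+ = h_\infty$, the periods lie in $\{1,4,5,6,12,20\}$, as recorded in the discussion of Theorem \ref{thm:longcycles}.)

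For a starting point in the box $\|(x,y)\|_\infty\leq R$, the $h_d$-orbit either stays entirely inside the good region --- in which case it coincides with the corresponding $h^\varepsilon$-orbit and is manifestly periodic --- or it exits briefly into the \emph{buffer region} $|y|=(d+3)/2$. In the latter case, a direct estimate using the central-difference relation $\delta s_{d+1}=s_d$ together with Lemma \ref{lem:6periodic} shows that $|s_d(\pm(d+3)/2)|$ is bounded by a small absolute constant, so the $h_d$-orbit is only a bounded perturbation of the corresponding $h^\varepsilon$-orbit and remains trapped in $\|\cdot\|_\infty\leq (d+3)/2$. This keeps the orbit well away from the escape region $|y|\geq (d+5)/2$, where $|s_d|\geq d+1$ (by Lemma 4.2 of \cite{doyle2022polynomialsrationalpreperiodicpoints}) and Lemma \ref{lem:3x_lowerbound_on_sd} would force exponential blow-up after one further iterate.

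The asymmetric definition of $R$ for $d\equiv 1,3,5\pmod{6}$ is exactly the calibration needed to ensure this confinement: the position of the boundary $\pm(d+1)/2$ within the $6$-periodic pattern of $\sigma$ governs which integer orbit types could exit the good region, forcing $R$ to be reduced by $2$ or $3$ in the less favorable residues of $d$. The main obstacle is the finite but intricate case analysis --- classifying the orbits of $h^\pm$ modulo $6$, tracking their maximum $\|\cdot\|_\infty$-norms, and verifying that the calibrated $R$ correctly rules out any starting point in the box whose orbit would escape past the buffer region.
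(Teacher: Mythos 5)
Your strategy is essentially the one the paper uses: bound orbits of lattice points in the box inside a slightly larger finite region, invoke bijectivity of $h_d$ on $\mathbb{Z}^2$ to conclude periodicity, model the dynamics inside $\max(|x|,|y|)\leq\frac{d+1}{2}$ by the lattice automorphism $h^\varepsilon(x,y)=(y,-x+\varepsilon\sigma(y))$ via Lemma \ref{lem:6periodic}, and resolve the boundary behaviour by a finite case analysis. However, the step where you write that $|s_d(\pm\frac{d+3}{2})|$ being bounded by a small constant implies the orbit ``remains trapped in $\|\cdot\|_{\infty}\leq\frac{d+3}{2}$'' is not a valid deduction from that bound alone. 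For instance, with $d\equiv 1\pmod 6$ and $R=\frac{d+1}{2}$, the point $(-(R+1),R+1)$ lies in that region but maps under $h_d$ to $(R+1,\,R+1+s_d(R+1))=(R+1,R+3)$, which escapes. What actually keeps orbits confined is that configurations such as $(-(R+1),R+1)$ never arise along an orbit started from $B$; establishing this is precisely the content of the explicit iterate-tracking that the paper performs (following $(y,-R-1)$ from the boundary case $x=R$, $s_d(y)=-1$, the other sign being handled by oddness). You correctly note in your closing sentence that the case enumeration is the crux, but the preceding ``so \ldots remains trapped'' frames it as an estimate rather than as the verification itself. Subject to that caveat, the organization via $h^\varepsilon$ with $\varepsilon=(-1)^{(d-1)/2}$ is a clean way of packaging the 6-periodic structure the paper uses implicitly.
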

\begin{proof}
    Define the box $B = \left\{ (x,y)\in \mathbb{Z}^2:\|(x,y)\|_{\infty}\leq R \right\}$. It is sufficient to prove for $(x,y)\in B$ that the iterates eventually return to $B$. Recall that $|s_d(y)|\leq 1$ for $|y|\leq \frac{d+1}{2}$, hence for $(x,y)\in B$ we have
    $$ \|h_d(x,y)\|_{\infty} \leq |x| + |s_d(y)|\leq R+1.$$
    Here equality holds iff $x = R$, $s_d(y) = -1$ or $x = -R$, $s_d(y) = 1$. In any other case, we are done since we immediately get $h_d(x,y)\in B$.
    
    For the two cases above, observe that $h_d(-x,-y) = -h_d(x,y)$ and so it suffices to consider the case of $x=R$, $s_d(y) = -1$. The computation is tedious, but since values of $s_d$ over the integers $-\frac{d+5}{2},\cdots,\frac{d+5}{2}$ are completely understood given $d\mod 6$, we can divide into cases and manually iterate $h_d$ starting from the point $h_d(x,y) = (y,-R-1)$.  As an example, suppose that $d \equiv 1 \mod 6$ and $y \equiv 4 \mod 6$.  Recall by Lemma \ref{lem:6periodic} that $s_d$ is $6$-periodic from $-R$ to $R$, with $s_d(R-1) = 0, s_d(R) = 1$ and that $s_d(R+1) = 2.$ We have
    $$(y, -R-1) \mapsto (-R-1,-y-2)$$
    since $s_d$ is odd and $s_d(R+1) = 2.$  If $|-y-2| > R$ and $|y| \leq R$ we must have $y \in \{ R-1, R \}$ which contradicts $s_d(y) = -1.$ So $|-y-2| \leq R$ and the value of $s_d(-y-2) = 0.$ So 
        $$(-R-1,-y-2) \mapsto (-y-2, R+1) \mapsto (R+1, y+4).$$
    If $|y+4| \leq R$ then $(R+1, y+4) \mapsto (y+4, -R)$ and we have returned to the box $B$.  If $|y+4| > R$ then we must have $y = R-3$ as $y \equiv 4 \mod 6.$  But we compute directly the orbit of $(R-3, -R-1)$:
        \begin{align*}
        & (R-3,-R-1) \mapsto (-R-1,-R+1) \mapsto (-R+1,R+1) \mapsto (R+1,R+1) \\
        & \mapsto (R+1,-R+1) \mapsto (-R+1,-R-1) \mapsto (-R-1,R-3) \mapsto (R-3,R),
    \end{align*}
    and $(R-3,R) \in B.$ Proceeding in this way for all cases of $d$ and $y$, one sees that the iterates return to $B$ after finitely many times.
\end{proof}
From the two results above, we deduce the following.
\begin{corollary}
    The number of rational periodic points of $h_d$ satisfies:
    $$ (d-4)^2 \leq \text{Per}(h_d,\mathbb{Q}) \leq (d+6)^2. $$
\end{corollary}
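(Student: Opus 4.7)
The plan is to read off both bounds directly from the two preceding results: the escape-radius corollary, which confines every rational periodic point of $h_d$ to the lattice box $\|(x,y)\|_{\infty} \leq \frac{d+5}{2}$, and the theorem which produces a box of side $2R+1$ inside which every lattice point is periodic. So the whole argument should amount to counting integer lattice points inside two axis-aligned squares.

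For the upper bound, since $d$ is odd the quantity $\frac{d+5}{2}$ is an integer, so the lattice box $\{(x,y)\in\mathbb{Z}^2 : |x|,|y|\leq \tfrac{d+5}{2}\}$ has exactly $d+6$ admissible values for each coordinate, hence at most $(d+6)^2$ rational periodic points by the escape-radius corollary. For the lower bound, the theorem guarantees that all integer points in the box $\|(x,y)\|_{\infty}\leq R$ are periodic, where $R$ takes the values $\frac{d+1}{2}$, $\frac{d-3}{2}$, $\frac{d-5}{2}$ in the residue classes $d\equiv 1,3,5\pmod 6$ respectively. In each case the box contains $(2R+1)^2$ lattice points, yielding $(d+2)^2$, $(d-2)^2$, and $(d-4)^2$ periodic points respectively; the common (worst-case) lower bound valid for all odd $d\geq 3$ is $(d-4)^2$.

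There is no real obstacle here: the proof is just the observation that one box is contained in the set of periodic points and the set of periodic points is contained in the other box, so one only needs to combine the two casewise bounds into a single uniform statement by taking the minimum over $d \bmod 6$ on the lower end. The slight subtlety worth flagging is the parity check that $\tfrac{d+5}{2}$ is indeed an integer for odd $d$, ensuring the upper-bound count is really $(d+6)^2$ and not off by one.
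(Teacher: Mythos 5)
Your proposal is correct and is precisely the argument the paper leaves implicit (the paper says only ``From the two results above, we deduce the following'' and gives no proof). You correctly identify that the upper bound is a lattice-point count in the box of radius $\frac{d+5}{2}$ provided by the escape-radius corollary (yielding $(d+6)^2$ since $d$ is odd), and the lower bound comes from counting $(2R+1)^2$ lattice points in the periodic box from the preceding theorem, taking the minimum $(d-4)^2$ over the three residue classes mod $6$ (attained when $d\equiv 5\pmod 6$).
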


\begin{remark}
    Computations suggest that the actual number of integer periodic points of $h_d$ is:
    \begin{equation*}
        \text{Per}(h_d,\mathbb{Q})
        = 
        \begin{cases}
            d^2 -\frac{8d}{3}+\frac{56}{3} & \textnormal{if }d\equiv 1 \mod 6 ,\\
            d^2 + 8 & \textnormal{if }d\equiv 3 \mod 6 ,\\
            d^2 -\frac{8d}{3}+\frac{40}{3} & \textnormal{if }d\equiv 5 \mod 6.
        \end{cases}
    \end{equation*}
\end{remark}

\section{Long cycles}

In this section, we investigate long periodic cycles present within the integer periodic points of $h_d$. This is the most surprising aspect of this family of H\'enon maps, as it turns out that for $d\equiv 1\mod 6$ the length of the longest cycle is $\frac{8d+10}{3},$ substantially better than interpolation in the degree $d$ H\'enon family provides. With small modifications, similar results can be obtained in any odd degree, though we do not provide proof.

\begin{theorem}\label{thm:thmBinsection}
    For $d\equiv 1\mod 6$, there exists a periodic cycle of length $\frac{8d+10}{3}$.
\end{theorem}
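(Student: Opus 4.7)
Write $d = 6n+1$, so that $R = (d+1)/2 = 3n+1$ and the target length is $\tfrac{8d+10}{3} = 16n+6$. The plan is to exhibit an explicit long orbit of $h_d$ following the same template as the $d=7$ computation that appears implicitly in the proof of Theorem~\ref{thm:manyperiodicpoints}.

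Take $P_0 = (R,1)$ (or an analogous small shift of it, depending on the parity of $n$, since the sign of $s_d(1)$ flips with $n\bmod 2$ by Lemma~\ref{lem:6periodic}). The first iterate $h_d(R,1) = (1,-R \pm 1)$ immediately exits the box $B=\{|x|,|y|\le R\}$. Using Lemma~\ref{lem:6periodic} for the $6$-periodic values of $s_d$ on $\{-R,\dots,R\}$, together with the single additional boundary value $s_d(R+1)$, the orbit is computed iteratively. The value $s_d(R+1)$ --- obtainable by a short induction on $d$ from the central difference relation $\delta s_{d+1}=s_d$ recorded in Lemma~\ref{lem:6periodic} --- is an explicit small integer (e.g.\ $s_7(5)=2$ and $s_{19}(11)=2$), and it controls the size of the "kick" the orbit receives at the boundary, ensuring the orbit does not escape to infinity.

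Because $s_d$ is odd, $h_d(-x,-y)=-h_d(x,y)$, so it suffices to verify the "antipodal" identity $h_d^{\,8n+3}(P_0) = -P_0$; applying this symmetry then closes the orbit at step $16n+6$. The half-orbit has a predictable structure: a short corner excursion near $(R+1,R+1)$ (determined by $s_d(R+1)$) alternates with longer interior stretches where $h_d$ coincides with the limiting map $h_\infty$ on the relevant lattice points, and the number of interior rotation steps grows linearly with $n$. Minimality of the period follows from the fact that the corner $(R+1,R+1)$ is visited exactly once per half-period, so no proper divisor of $16n+6$ can be the period.

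The main obstacle is a tedious but mechanical bookkeeping: one must track the orbit across all $8n+3$ half-period steps for general $n$, handle a minor case split on $d\bmod 12$ in the choice of starting point (as foreshadowed by the remark that similar results in other odd degrees can be obtained by a small shift), and verify in each case that the orbit re-enters $B$ correctly after each corner excursion. All required values of $s_d$ are captured by Lemma~\ref{lem:6periodic} for the interior together with the explicit inductive formula for $s_d(R+1)$ afforded by the central difference recursion.
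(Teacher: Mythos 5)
Your proposal captures the right high-level ingredients --- the odd symmetry $h_d(-x,-y)=-h_d(x,y)$ that halves the work, the special boundary value $s_d(R+1)$ as the ``kick'' mechanism, and the count $16n+6$ --- but it leaves the actual engine of the proof unbuilt, and some of the scaffolding around it is shaky.

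The single most important step, which your sketch never isolates, is the uniform $8$-step recursion
\[
h_d^{\circ 8}(R+1,y) = (R+1,\,y+6) \quad\text{for } -R+1\le y\le R-7,\ y\equiv R-1 \bmod 6,
\]
valid for every $d\equiv 1\bmod 6$. Once this single identity is verified (it is an eight-line hand computation using the $6$-periodic values of $s_d$ on $[-R,R]$ together with $s_d(R+1)=2$), the entire orbit decomposes into $n$ such chunks, three corner steps, another $n$ chunks (by the odd symmetry), and three more corner steps, giving $2(8n+3)=16n+6$ with no further bookkeeping. Your proposal instead speaks of ``tracking the orbit across all $8n+3$ half-period steps for general $n$'' as a problem to be solved; this is precisely the gap, because without the uniform recursion the induction does not get off the ground, and with it there is essentially nothing left to track.

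The starting point is also not right as stated. You take $P_0=(R,1)$ and assert $h_d(R,1)=(1,-R\pm 1)$ ``immediately exits the box.'' But $s_d(1)=\sigma\bigl(1+\tfrac{3(d-1)}{2}\bigr)$ flips sign with the parity of $n=(d-1)/6$: for $n$ even, $s_d(1)=1$ and $h_d(R,1)=(1,-R+1)$ stays inside $\|\cdot\|_\infty\le R$, so the claimed immediate escape is false. The paper's choice $(R+1,-R+1)$, which sits on the boundary $x=R+1$ and is the natural anchor for the $8$-step recursion, works for all $d\equiv 1\bmod 6$ with no case split; the case split ``on $d\bmod 12$'' you anticipate is an artifact of the starting point rather than a genuine feature of the problem. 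Finally, the characterization of the long segments as ``interior stretches where $h_d$ coincides with $h_\infty$'' is misleading: the $8$-step chunk repeatedly visits points with a coordinate equal to $\pm(R+1)$, where $s_d$ takes the value $\pm 2$ rather than a value of $s_\infty$, and it is exactly this disagreement with $h_\infty$ that sustains the long cycle.
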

\begin{proof}
    For $d\equiv 1\mod 6$ recall that $R = \frac{d+1}{2}$. We first tabulate the tail values of $s_d$ as follows:
    \begin{center}
        \begin{tabular}{c||c|c|c|c|c}
            $y$ & $R-2$ & $R-1$ & $R$ & $R+1$ & $R+2$ \\ \hline
            $s_d(y)$ & $-1$ & 0 & 1 & 2 & $d+2$
        \end{tabular}
    \end{center}
    We also recall that $s_d$ is an odd function and that it is 6-periodic on $-R,\cdots,R$ with values $-1,-1,0,1,1,0$. This determines the values of $s_d$ on integers $-R-2$ to $R+2$. We claim that the orbit of $(R+1,-R+1)$ under iterations of $h_d$ take the following form:
    \begin{center}
            \adjustbox{scale=0.8,center}{
            \begin{tikzcd}
                {(-R-1,-R+1)} & {(-R+1,R+1)} & {(R+1,R+1)} & {(R+1,-R+1)} \\
                {(-R-1,-R+7)} &&& {(R+1,-R+7)} \\
                \vdots &&& \vdots \\
                {(-R-1,R-7)} &&& {(R+1,R-7)} \\
                {(-R-1,R-1)} & {(-R-1,-R-1)} & {(R-1,-R-1)} & {(R+1,R-1)}
                \arrow["8", maps to, from=1-4, to=2-4]
                \arrow["8", maps to, from=2-4, to=3-4]
                \arrow["8", maps to, from=3-4, to=4-4]
                \arrow["8", maps to, from=4-4, to=5-4]
                \arrow["1", maps to, from=5-4, to=5-3]
                \arrow["1", maps to, from=5-3, to=5-2]
                \arrow["1", maps to, from=5-2, to=5-1]
                \arrow["8", maps to, from=5-1, to=4-1]
                \arrow["8", maps to, from=4-1, to=3-1]
                \arrow["8", maps to, from=3-1, to=2-1]
                \arrow["8", maps to, from=2-1, to=1-1]
                \arrow["1", maps to, from=1-1, to=1-2]
                \arrow["1", maps to, from=1-2, to=1-3]
                \arrow["1", maps to, from=1-3, to=1-4]
            \end{tikzcd}
    }
    \end{center}
    Here the numbers on the arrows denote the number of iterations of $h_d$ involved. The horizontal arrows are clear. For the right-hand side vertical arrows, it suffices to prove that $h_d^{\circ 8}(R+1,y) = (R+1,y+6)$ for integer $y$ satisfying $-R+1\leq y\leq R-7$ and $y\equiv R-1\mod 6$. This is a simple check:
    \begin{align*}
        & (R+1,y) \mapsto (y,-R-1) \mapsto (-R-1,-y-2) \mapsto (-y-2,-R) \mapsto (R,y+3) \\
        & \mapsto (y+3,-R) \mapsto (-R,-y-4) \mapsto (-y-4,R+1) \mapsto (R+1,y+6)
    \end{align*}
    The left-hand side vertical arrows follow from  $h_d(-x,-y) = -h_d(x,y)$. Therefore the period of $(R+1,-R+1)$ is
    $$ 6 + 2\cdot 8\cdot \frac{2R-2}{6} = \frac{8d+10}{3}.$$
    \end{proof}

    As noted in the introduction, the family of H\'enon maps is defined by $d+2$ parameters, so we see that these cycles are much longer than one can hope for from a naive interpolation argument (see Section 11 of \cite{henon_openqs} for details of this argument).

  \subsection{Effects of shifting}
    In the case when $d\equiv 3,5\mod 6$, computations suggest that the longest periodic cycle of $h_d$ has length $\leq 20$ for all $d$. This resonates with the fact, which will be discussed later in the last section, that the longest cycle of the limiting H\'enon map $h_{\infty}$ has length 20.  It is an interesting feature of the dynamics on the boundary of the escape region in the case $d \equiv 1 \mod 6$ that $h_d$ is a good but not \emph{too} good approximation of the map $h_{\infty}$. This suggests that for $d \equiv 3,5 \mod 6$ similar long cycles might be obtained by minor (integer-valued) perturbations of $h_d.$  Indeed, here we list some experimental results that arise from shifting our H\'enon maps.  Define $h_{d,c}(x,y)$ to be
$$ h_{d,c}(x,y) =(-y,x+s_d(y+c)) $$
where $c\in \mathbb{Z}$. We find that such H\'enon maps may have even more integer periodic points and even longer cycles for a suitably chosen shift $c$. The following tables contain polynomial interpolations of computational results obtained for $15\leq d\leq 299$; we do not provide any proof.

\definecolor{LightGreen}{rgb}{0.80,0.95,0.80}
\begin{itemize}
    \item $d\equiv 1\mod 6$:
    \vskip 0.5em
    \begin{center}
        \begin{tabular}{|c|c|c|}
            \hline 
            $c$ & Number of integer periodic points & Longest cycle \\
            \hline \hline
            $ 0$    & $d^2 -\frac{8d}{3}+\frac{56}{3}$  & $\frac{ 8d+10}{3}$    \\ \hline
            $\pm 1$ & \cellcolor{LightGreen!60}$d^2 + 2d +  4$ & $\frac{10d- 7}{3}$    \\ \hline
            $\pm 2$ & $d^2 - 6d + 18$                   & \cellcolor{LightGreen!80}$\frac{16d-61}{3}$    \\ \hline
        \end{tabular}
    \end{center}
    \vskip 1em
    \item $d\equiv 3\mod 6$:
    \vskip 0.5em
    \begin{center}
        \begin{tabular}{|c|c|c|}
            \hline 
            $c$ & Number of integer periodic points & Longest cycle \\
            \hline \hline
            $ 0$ & $d^2 + 8$  & $20$    \\ \hline
            $-1$ & $d^2 + 4d$   & \cellcolor{LightGreen!80}$8d-39$ \\ \hline 
            $ 1$ & \cellcolor{LightGreen!80}$d^2 + 4d + 1$  & {\cellcolor{LightGreen!80}$8d-39$}   \\ \hline
            $\pm 2$ & $d^2 - 4d + 7$  & $60$    \\ \hline
        \end{tabular}
    \end{center}
    \vskip 1em
    \item $d\equiv 5\mod 6$:
    \vskip 0.5em
    \begin{center}
        \begin{tabular}{|c|c|c|}
            \hline 
            $c$ & Number of integer periodic points & Longest cycle \\
            \hline \hline
            $ 0$    & $d^2 -\frac{8d}{3}+\frac{40}{3}$  & $20$    \\ \hline
            $\pm 1$ &\cellcolor{LightGreen!80} $d^2 - 2d +  29$                   & $\frac{14d- 31}{3}$    \\ \hline
            $\pm 2$ &$d^2 -\frac{22d}{3}+\frac{161}{3}$ & \cellcolor{LightGreen!80}$\frac{28d-185}{3}$    \\ \hline
        \end{tabular}
    \end{center}
\end{itemize}

\subsection{Transcendental Hénon maps} \label{subsection:transc_henon}

Recall from Proposition \ref{lem:odd_sd_unifcvg} that $(-1)^d s_{2d+1}(x)$ locally uniformly converges to $\frac{2}{\sqrt{3}}\sin(\frac{\pi x}{3})$ on the real line as $d$ tends to infinity. We conclude this article by studying the dynamical behaviour of the limiting Hénon map:
$$ h_{\infty}(x,y) = \left(y, -x + \frac{2}{\sqrt{3}}\sin\left(\frac{\pi y}{3}\right)\right). $$
As each $h_{d}$ is integer-valued for each odd $d$, the rapid convergence of $s_d$ on a neighborhood of $0$ ensures that the integer periodic points of $h_d$ will have dynamics described by $h_{\infty}$ for points of sufficiently small norm compared to $d$.

\begin{proposition}
		Any integer point $(x,y)\in \mathbb{Z}^2$ is periodic under iteration of $h_{\infty}$ with periods 1,4,5,6,12 or 20. The period is determined by the values of $x,y$ modulo 6 as follows:
		\begin{center}
			\[
			\begin{array}{@{}l*{7}{c}@{}}
				\toprule
				y\mod 6 & \multicolumn{6}{c@{}}{x\mod 6}\\
				\cmidrule(l){2-7}
				& 0 & 1 & 2 & 3 & 4 & 5\\
				\midrule
				5 & 12 & 20 & 20 & 20 & 20 & 12\\
				4 & 20 & 20 & 12 & 12 & 20 & 20\\
				3 & 4 & 20 & 12 & 4 & 12 & 20\\
				2 & 20 & 20 & 20 & 12 & 12 & 20\\
				1 & 12 & 12 & 20 & 20 & 20 & 20\\
				0 & 4 & 12 & 20 & 4 & 20 & 12\\
				\bottomrule
			\end{array}
			\]
		\end{center}
		There are the following exceptions:
		$$ \text{period of }(x,y) = 
		\begin{cases}
			1 & (x,y) = (0,0), \\
			5 & (x,y) = (2,0),(2,1),(1,2),(0,2),(-1,1),\\
			&(-2,0),(-2,-1),(-1,-2),(0,-2),(1,-1), \\
			6 & (x,y) = (1,0),(1,1),(0,1),(-1,0),(-1,-1),(0,-1). \\
		\end{cases} $$

  \label{prop_transc_period}
\end{proposition}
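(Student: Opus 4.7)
The plan exploits the $6$-periodicity of $\sigma(y) := \frac{2}{\sqrt{3}}\sin(\pi y/3)$.  Direct evaluation at $y = 0, 1, \ldots, 5$ yields $\sigma$-values $0, 1, 1, 0, -1, -1$, showing $\sigma$ is integer-valued on $\mathbb{Z}$ with image in $\{-1, 0, 1\}$ depending only on $y \bmod 6$.  Consequently $h_\infty$ preserves $\mathbb{Z}^2$ and descends to a bijection $\bar h_\infty$ on the finite set $(\mathbb{Z}/6\mathbb{Z})^2$.  I would first enumerate the $\bar h_\infty$-orbits directly: there are nine of them, of lengths $1, 1, 2, 3, 3, 5, 5, 6, 10$, partitioning the $36$ residue classes.

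The main technical ingredient is a structural identity linking the dynamics on a residue class to the rotation $R(u, v) := (v, -u)$:  for all $(x_0, y_0), (u, v) \in \mathbb{Z}^2$ and $n \geq 0$,
$$h_\infty^n\bigl((x_0, y_0) + 6(u, v)\bigr) \;=\; h_\infty^n(x_0, y_0) + 6\,R^n(u, v).$$
This follows by induction on $n$ since $\sigma(b + 6v') = \sigma(b)$:  a lattice shift $6(u, v)$ propagates along the orbit with its $\sigma$-corrections unaffected, while the shift itself is rotated by $R$ at each application of $h_\infty$.

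For each of the nine $\bar h_\infty$-orbits I would select a small-coordinate representative $(x_0, y_0)$ and compute its $\mathbb{Z}^2$-orbit by hand, reading off its period $P$.  The calculation produces $P = 1, 4, 4, 12, 12, 5, 5, 6, 20$ for the representatives $(0,0), (3,3), (3,0), (2,3), (2,4), (2,0), (2,1), (1,0), (3,1)$ respectively.  The structural identity then propagates these to every other point: periodicity of $(x_0, y_0) + 6(u, v)$ at time $n$ is equivalent to
$$h_\infty^n(x_0, y_0) - (x_0, y_0) \;=\; 6\,(I - R^n)(u, v).$$
Using that $I - R^n$ has determinant $2$ (for $n$ odd), equals $2I$ (for $n \equiv 2 \bmod 4$), or vanishes (for $4 \mid n$), together with the explicit drift values from the third step, a finite case-check in each orbit shows that no $n < \operatorname{lcm}(P, 4)$ can solve this equation with $(u, v) \in \mathbb{Z}^2$ beyond the trivial solution, while at $n = \operatorname{lcm}(P, 4)$ both sides vanish identically.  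Hence the generic period in each residue class is $\operatorname{lcm}(P, 4) \in \{4, 12, 20\}$, reproducing the table; the shorter period $P \in \{1, 5, 6\}$ occurs exactly at the distinguished representative, which sweeps out a $\mathbb{Z}^2$-orbit of length $P$, yielding the $1 + 5 + 5 + 6 = 17$ exceptional points listed.

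The main obstacle is purely computational:  verifying the periods of the representatives by direct orbit computation, with the length-$20$ orbit of $(3, 1)$ being the longest and most tedious case, though every step is mechanical.
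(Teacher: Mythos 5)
Your proposal is correct, and every ingredient checks out: the values $\sigma(0),\dots,\sigma(5)=0,1,1,0,-1,-1$, the descent to a bijection of $(\mathbb{Z}/6\mathbb{Z})^2$ with orbit lengths $1,1,2,3,3,5,5,6,10$, the identity $h_\infty^n\bigl((x_0,y_0)+6(u,v)\bigr)=h_\infty^n(x_0,y_0)+6R^n(u,v)$, and the facts about $I-R^n$ (invertible over $\mathbb{Z}[1/2]$ for $n\not\equiv 0 \bmod 4$, zero for $4\mid n$). The paper's proof is, at heart, the same finite verification — it symbolically iterates $(6x+a,6y+b)$ for each of the $36$ residue pairs, which implicitly encodes your rotation identity in the pattern of signs $\pm 6x,\pm 6y$ appearing along the orbit, and extracts the exceptions by checking when intermediate points coincide. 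What your organization buys is a genuine reduction in bookkeeping: you only compute $9$ concrete orbits (one per residue orbit) rather than $36$ symbolic ones, the generic period falls out as $\operatorname{lcm}(P,4)$ or $\operatorname{lcm}(L,\cdot)$ from the order of $R$ rather than from inspection, and the exceptional points are identified cleanly as the unique solutions of $(I-R^n)(u,v)=0$ for $n$ not divisible by $4$ — i.e.\ exactly the representative's own orbit — rather than by ad hoc distinctness checks. One small point to make explicit in a final write-up: when ruling out periods strictly between the mod-$6$ orbit length $L$ and $\operatorname{lcm}(P,4)$, the candidate periods are the multiples of $L$ dividing $\operatorname{lcm}(P,4)$ (e.g.\ $n=3,6$ for the class of $(2,3)$ and $n=10$ for the class of $(3,1)$), and each is excluded because the required equation $h_\infty^n(x_0,y_0)-(x_0,y_0)=6(I-R^n)(u,v)$ forces a non-integral $(u,v)$; you gesture at this but it is the one place where the casework must actually be carried out.
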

\begin{proof}
		One can check this manually by iterating the point $(6x+a,6y+b)$ for integers $0\leq a,b< 6$. Here we show only one instance of this. Take $a=1$, $b=0$. The forward orbit of $(6x+1,6y)$ over $h_{\infty}$ is as follows:
		\begin{align*}
			&(6x+1,6y)\mapsto (6y,-6x-1)\mapsto (-6x-1,6y-1) \mapsto (-6y-1,6x) \mapsto (6x,6y+1)\\
			& \mapsto (6y+1,-6x+1) \mapsto (-6x+1,-6y) \mapsto (-6y,6x-1)\mapsto (6x-1,6y-1) \\
			&\mapsto (6y-1,-6x) \mapsto (-6x,-6y+1)\mapsto (-6y+1,6x+1) \mapsto (6x+1,6y).
		\end{align*}
		Note that all of these points are distinct \textit{except} for the case of $x=y=0$, when the sixth iterate is equal to the initial point, hence the point $(1,0)$ (and its iterates) having a period of six.
\end{proof}

We now analyse the local behaviour of $h_d$ near the integer points. Observe that the Jacobian matrix of $h_{\infty}$ is 6-periodic in both $x$ and $y$:
$$ Dh_{\infty}(x,y) = \begin{pmatrix}
    0 & 1 \\ 
    -1 & \frac{2\pi}{3\sqrt{3}}\cos(\frac{\pi x}{3})
\end{pmatrix}. $$
Note also from the proof of Proposition \ref{prop_transc_period} that the forward orbit modulo 6 only depend on the values of $x$ and $y$ modulo 6, excluding some $(x,y)$ that are close to the origin. The two facts combined makes it possible to explicitly calculate the multiplier and its eigenvalues for all integer points.

Here we instead take a visual approach: We give a small perturbation on each integer point and plot its forward orbit. Each points in the forward orbit is coloured as follows. Forward orbits of points near:
\begin{itemize}
    \item period 1 points are coloured in bright red;
    \item period 4 points are coloured in dark red/black;
    \item period 6 points are coloured in bright blue;
    \item period 12 points are coloured in navy/cyan;
    \item period 5 points are coloured in green/dark purple;
    \item period 20 points are coloured in lime/bright purple.
\end{itemize}
After applying $2 \times 10^5$ iterations of $h_{\infty}$ to randomly perturbed points near integer points, we obtain the following picture:
\begin{center}
    \includegraphics[width=1\textwidth]{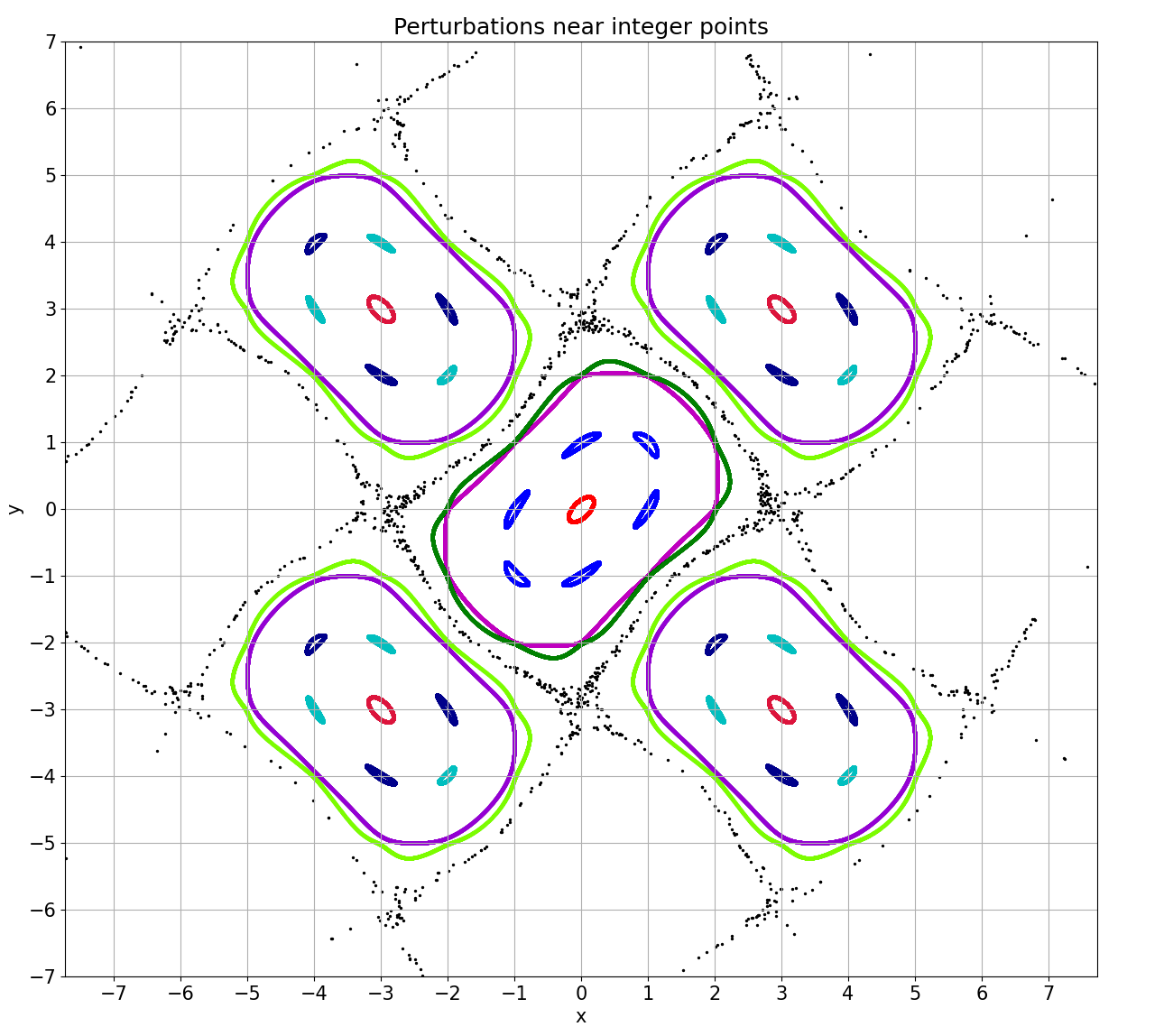}
\end{center}
In the same way as $h_d$, all integer points have multipliers with determinant 1. Hence the eigenvalues are either (i) real numbers whose product is 1 (of \textit{saddle type}), or (ii) complex conjugates with modulus 1.

Looking at the picture above, we can categorize the integer points as follows:
\begin{itemize}
    \item Period 6, 12 points and period 4 points $(x,y)\in 3\mathbb{Z}\times 3\mathbb{Z}$ with $x-y\equiv 0\mod 6$ are not of saddle type (the orbits near these period 4 points are coloured in dark red). The small closed loops formed around those points suggest that the perturbed points are indeed being rotated.
    \item Period 5, 20 points are of saddle type. The orbits of perturbations of these points converge to two distinct limit sets, each coloured in green and purple.
    \item Period 4 points $(x,y)\in 3\mathbb{Z}\times 3\mathbb{Z}$ with $x-y\equiv 3 \mod 6$ are of saddle type, and the orbits near these points are coloured in black. The following picture depicts the forward orbit of a perturbation near $(3,0)$ after $5\times 10^5$ iterations of $h_{\infty}$:
    \begin{center}
        \includegraphics[width=0.75\textwidth]{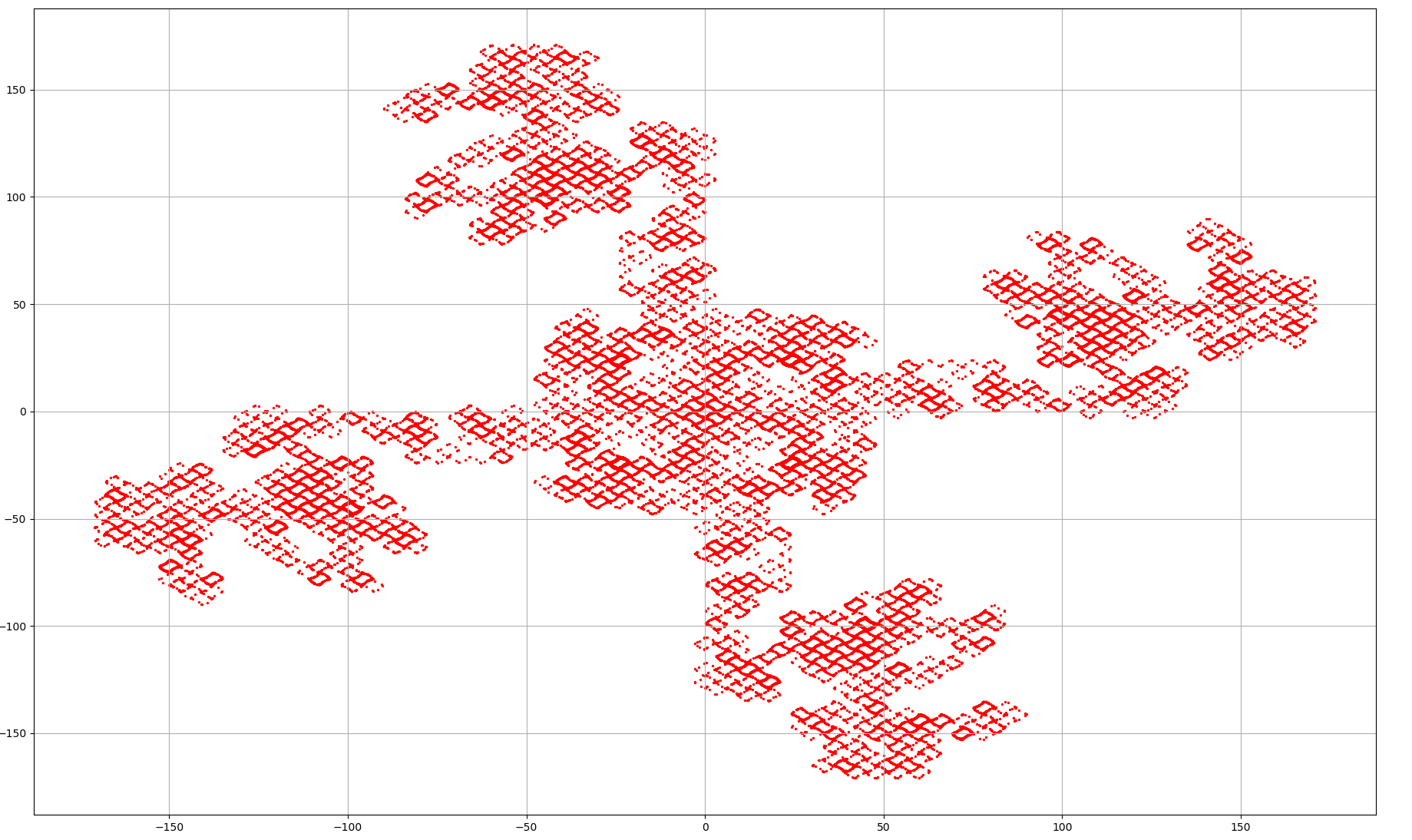}
    \end{center}
\end{itemize}
 
One might naturally ask whether a better choice of limiting map might produce longer cycles; however, it may be that any similar construction of a family of H\'enon maps arising from a convergent sequence of integer-valued polynomials (analogous to $s_d$) can only admit long cycles on the boundary of the escape region, which would imply a linear bound in cycle length.  It would also be interesting to explore the family of H\'enon maps of the form $(x,y) \mapsto (y, -x+f_d(y))$ where $f_d$ is the (not explicit) family of polynomials with $d + \lfloor \log_2(d) \rfloor$ rational preperiodic points whose existence is known by Doyle and Hyde.

\newpage 

\printbibliography

\end{document}